\theoremstyle{definition}
\newtheorem{theorem}{Theorem}[section]
\newtheorem{lemma}[theorem]{Lemma}
\newtheorem{definition}{Definition}[section]
\newtheorem{corollary}[theorem]{Corollary}
\newtheorem{claim}[theorem]{Claim} 
\let\originalleft\left
\let\originalright\right
\renewcommand{\left}{\mathopen{}\mathclose\bgroup\originalleft}
\renewcommand{\right}{\aftergroup\egroup\originalright}
\newcommand{\bP}[2][]{\Pr\ifthenelse{\isempty{#1}}{}{_{#1}}\left[#2\right]}
\newcommand{\bE}[2][]{\mathop\mathbb{E}\ifthenelse{\isempty{#1}}{}{_{#1}}\left[#2\right]}
\newcommand{\bI}[2][]{\mathop\mathbb{I}\ifthenelse{\isempty{#1}}{}{_{#1}}\left[#2\right]}
\newcommand{\Var}[2][]{\mathbf{Var}\ifthenelse{\isempty{#1}}{}{_{#1}}\left[#2\right]}
\title{\textrm{Rankwidth of Graphs with Balanced Separations: Expansion for Dense Graphs}\\}
\author{{Emile Anand}\thanks{Department of Algorithms, Combinatorics, and Optimization. School of Computer Science. Georgia Institute of Technology. Atlanta, GA, 30308. Email: \textrm{emile@gatech.edu}. Work done while an undergraduate at Caltech, and while a visiting researcher at Princeton University's Department of Mathematics.}\\}
\begin{document}

\maketitle

\begin{abstract}
\noindent We prove that every graph of rankwidth at least $72r$ contains an induced subgraph whose \emph{minimum balanced cutrank} is at least $r$, which implies a vertex subset where every balanced separation has $\mathbb{F}_2$-cutrank at least $r$. This implies a novel relation between rankwidth and a well-linkedness measure, defined entirely by balanced vertex cuts. As a byproduct, our result supports the notion of \emph{rank-expansion} as a suitable candidate for measuring expansion in dense graphs.  
\end{abstract}

\section{Introduction}
  The relationship between combinatorial width parameters and connectivity properties is a cornerstone of structural and algorithmic graph theory \cite{BODLAENDER19981, SEYMOUR199322, ROBERTSON1994323, harvey2016parameterstiedtreewidth, BOTTCHER20101217}.  
For tree-width, the celebrated Grid-Minor Theorem of Chekuri and Chuzhoy \cite{10.1145/2820609} and its recent developments \cite{doi:10.1137/1.9781611977912.48, Geelen_2023} characterizes large width via the presence of an $r\times r$ grid minor, while branch-width \cite{fomin2021fastfptapproximationbranchwidth,https://doi.org/10.1002/net.20050} admits a well-linked-set characterization due to Oum and Seymour \cite{Oum_Seymour_2006}. While such relationships are well-understood for treewidth and branchwidth, a corresponding characterization for rankwidth and its sibling, clique-width, remains a gap in the literature: indeed, no well-linkedness or expander-style certificate has been pinned down. Here, we provide a proof of existence of such a certificate by establishing a linear dependence between rankwidth and another parameter, the minimum balanced cutrank. We begin by defining the minimum balanced cutrank. \\

A \emph{separation} of a graph $G=(V, E)$ is a pair $X,Y\subseteq V(G)$ such that $X\sqcup Y = V(G)$. Given a separation $(X,Y)$ of $V(G)$, the \emph{cutrank} of the separation, $\operatorname{cutrank}(X,Y)$, is the $\mathbb{F}_2$-rank of its $|X|\times|Y|$ biadjacency matrix. We say that a separation $(X,Y)$ is \emph{balanced} if $\tfrac13|V(G)|\le|X|$ and $|Y|\le\tfrac23|V(G)|$. Next, for any graph $G=(V,E)$, define the \emph{minimum balanced cutrank}, $\operatorname{min-bal-cutrank}(G)$, by
\begin{align}\operatorname{min-bal-cutrank}(G) = \min\left\{\operatorname{cutrank}(X,Y)\text{ }\Big| \text{ }X\sqcup Y = V(G)\text{ with }|X|, |Y| \leq \frac{2n}{3}\right\}. \end{align}

Informally, a graph whose minimum balanced cutrank is large should have no bisection with low rank, and therefore behaves like an \emph{expander graph} when measured in $\mathbb{F}_2$ rank. A vertex set is said to be an expander if it has many external neighbors. We call a graph bounded if every sufficiently large subgraph contains a subset which is not expanding.  There is a long line of literature on expander graphs for their ubiquitous use in pseudorandomness \cite{Alon1986Eigenvalues,anand2025pseudorandomnessstickyrandomwalk,golowich_et_al:LIPIcs.CCC.2022.27,anand2025pseudorandomnessexpanderrandomwalks}, unique games \cite{bafna2021highdimensionalexpanderseigenstripping}, log-space computation \cite{Reingold2008}, the PCP theorem \cite{10.1145/1236457.1236459}, fault-tolerant computation \cite{AjtaiAlonBruckEtAl1992,chaudhari2025peer}, and a number of other applications. We refer the interested reader to the (incredible) surveys of Lubotzky \cite{lubotzky2011expandergraphspureapplied} and Hoory, Linial, and Wigderson \cite{Hoory2006}.  However, the combinatorial notion of expansion is only meaningful for sparse graphs, and it is desirable in a number of applications to have an analog of expansion for dense graphs \cite{alon2003densegraphsantimagic, alon1992almost, doi:10.1137/18M122039X, 10.1016/0012-365X(95)00242-O, Alon_Nachmias_Shalev_2022, alon1998packing, srivastava2017alonboppanatypeboundweighted}. There are a number of concepts that have been studied in the literature previously, such as flip-width \cite{K_hn_2014,toruńczyk2024flipwidthcopsrobberdense} and twin-width \cite{bonnet2022}, but prior to now there was no agreed-upon notion.\\ 
 
For instance, dense-graph expansion could be defined by the existence of a graphon sequence of finite cut-norm with a growing number of expander subgraphs of degree $O(1)$; however, this notion is maximized by a clique, for which we instead desire a small value. Information-theoretically, it is helpful for a graph and its complement to have similar dense-graph measures of expansion.  While twin-width is defined via contraction sequences and flip-width via a game-theoretic process, \emph{rank-expansion} is based on the $\mathbb{F}_2$-rank of the adjacency matrix. 
This offers a direct linear-algebraic perspective that is intuitively similar to the spectral definitions of expansion for sparse graphs. Moreover, a good parameter for expansion in dense graphs should be small on structurally-simple dense graphs (like cliques) and large on complex, random-looking dense graphs. This is satisfied by rankwidth, since the rankwidth of a clique is $1$ while the rankwidth of a random graph $G\sim G(n,p)$ is linear in $n$ for constant $p$, with high probability. With this perspective, our main result in Theorem \ref{thm:main} proposes that \emph{rank-expansion is a suitable candidate for the analog of expansion for dense graphs}. Moreover, when working over $\mathbb{F}_2$, taking the complement of the graph is equivalent to adding the all-ones matrix, which only changes the cutrank by at most $1$.

\begin{definition}[Rank-decomposition and rankwidth \cite{Oum_Seymour_2006, Voigt}] Given a graph $G=(V, E)$, a \emph{rank-decomposition} of $G$ is a pair $(T,\gamma)$ where $T$ is a subcubic tree (i.e., every internal node has degree at most $3$) and $\gamma\colon V(G)\to\operatorname{Leaf}(T)$ is a bijection. Observe that
deleting any edge $e\in E(T)$ splits the set of leaves into parts $(X_e,Y_e)$, thereby inducing a cut in $G$.  
Then, the rankwidth of $G$ is given by
\begin{align}\mathrm{rankwidth}(G) \coloneqq \min_{(T,\gamma)} \max_{e\in E(T)}\operatorname{cutrank}_G(X_e,Y_e).\end{align}
\end{definition}

Motivated by this definition of rankwidth, we now provide a formal definition of \emph{rank-expansion}.

\begin{definition}[Rank-Expansion]\label{def: rank-expansion} 
For a graph $G=(V,E)$ where $|V|=n$, the \emph{rank-expansion} is given by
    \begin{align}\operatorname{rank-expansion}(G) \coloneqq \max_{S\subseteq V(G): |S|\leq n/2} \frac{\operatorname{cutrank}(S, V(G)\setminus S)}{|S|}.\end{align}
\end{definition}

We show that large rankwidths of a graph $G$ enforce the existence of a large, rank-robust subgraph $H$.

\begin{restatable}{theorem}{MainThm}
\label{thm:main}
Let $r\ge1$ and $G=(V, E)$ be a graph. If $\mathrm{rankwidth}(G)\ge 72r$, then $G$ contains an induced subgraph $H$ with $\operatorname{min-bal-cutrank(H)}\ge r$.
\end{restatable}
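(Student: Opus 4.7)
The plan is to prove the contrapositive: assuming every induced subgraph $H$ of $G$ satisfies $\operatorname{min-bal-cutrank}(H) < r$, construct a rank-decomposition of $G$ of width less than $72r$. The construction is recursive. Starting from the trivial partition $\{V(G)\}$, I repeatedly pick a non-singleton block $B$, invoke the hypothesis on $G[B]$ to obtain a $(1/3,2/3)$-balanced separation $(X,Y)$ with $\operatorname{cutrank}_{G[B]}(X,Y) < r$, and split $B$ into $X$ and $Y$ in the splitting tree. Iterating until every block is a singleton produces a subcubic tree $T$ and a rank-decomposition $(T,\gamma)$ of $G$.

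To control the width, fix an edge $e$ of $T$ corresponding to the cut $(A_e, V(G)\setminus A_e)$. Partitioning the biadjacency columns along the splits ancestral to $e$ and applying subadditivity of $\mathbb{F}_2$-rank, one sees that $\operatorname{cutrank}_G(A_e, V(G)\setminus A_e)$ is at most the sum of the splitting cutranks along the root-to-$e$ path in $T$, each strictly less than $r$. A direct depth estimate, based on the geometric decrease of block sizes under $(1/3,2/3)$-balance, gives depth $O(\log n)$ and hence width $O(r\log n)$, which falls short of $72r$. To replace the $\log n$ factor by an absolute constant, the plan is to adapt a Dvor\'ak--Norin-style amortized selection: maintain a nonnegative vertex weight $w \colon V(G)\to \mathbb{Z}_{\ge 0}$ that records each vertex's cumulative contribution to the cutranks of its ancestor splits, and at each step select a balanced separator of $G[B]$ that is balanced with respect to $w$ rather than cardinality. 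This prevents any root-to-leaf path from accumulating more than a constant multiple of $r$ cutrank units, and careful tracking of the $(1/3,2/3)$ balance factors together with the subadditivity losses across recursion layers is what produces the explicit constant $72$.

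The principal obstacle is establishing the weighted analog of the hypothesis that this amortization needs: for every nonnegative weight $w$ on $V(G[B])$, the block $G[B]$ must admit a $w$-balanced separation of cutrank less than $r$. In the cardinality/treewidth setting this reduces to the unweighted statement via vertex blow-up, but $\mathbb{F}_2$-cutrank does not respect naive duplication, since blowing up a vertex $v$ with multiplicity $w(v)$ changes the biadjacency rank in a way that depends on $v$'s neighborhood. The plan is to replace blow-up with a rank-aware construction --- for instance, by attaching carefully chosen gadgets along the boundary $\partial B$ so that the cutrank landscape of an auxiliary induced subgraph mimics the $w$-weighted cutrank of $G[B]$ --- and then applying the unweighted hypothesis there. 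Pinning down the precise loss incurred in this reduction is what fixes the constant $72$ in the theorem, and is the main technical hurdle I would expect to spend the bulk of the proof on.
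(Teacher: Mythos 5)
Your proposal pursues the contrapositive and tries to \emph{build} a rank-decomposition bottom-up, whereas the paper argues by contradiction through a rank-tangle of order $72r$: it fixes a maximal tangle-oriented separation $(X,Y)$ of cutrank $<70r$ that is ``well-behaved,'' compresses $X$ by $Y$-neighborhood equivalence classes, and shows that any balanced low-rank cut of the compressed induced subgraph lifts to two separations of $G$ of cutrank $<72r$ whose tangle orientations force a strictly larger small side than $X$, contradicting maximality. These really are different strategies, and it is worth noting why the paper's route works where yours stalls: the tangle argument only ever invokes the hypothesis on genuine induced subgraphs $G[Y_i]$ or $H'$, so the unweighted min-bal-cutrank hypothesis applies directly at every step, and the tangle's non-covering axiom plays the amortization role that you are trying to extract from a weighted separator.

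The gap in your proposal is exactly the one you flag at the end, and it is not a detail you can hand-wave: you need, for every weight $w$ on a block $B$, a $w$-balanced low-cutrank separation of $G[B]$, but the hypothesis only gives \emph{cardinality}-balanced separations. In the treewidth world this is rescued by vertex blow-up, but for cutrank the blown-up graph is not an induced subgraph of $G$, so the hypothesis does not apply to it at all; independently of that, duplicate rows do not raise $\mathbb{F}_2$-rank, so a cardinality-balanced cut of the blow-up need not be $w$-balanced in any controlled way. Your proposed ``rank-aware gadgets'' are not specified, and absent a concrete construction that both (i) stays inside the class of induced subgraphs of $G$ and (ii) makes cardinality balance track $w$-balance while preserving cutrank up to an additive $O(r)$, the amortization step — which is the only thing that would replace the naive $O(r\log n)$ bound by $72r$ — has no foundation. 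Until that lemma is proved, the argument does not yield a constant-width decomposition, and the theorem is not established.
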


\noindent Theorem~\ref{thm:main} parallels the grid-minor theorem of   Chekuri and Chuzhoy \cite{doi:10.1137/1.9781611977912.48, Geelen_2023, 10.1145/2820609}: delete vertices instead of contracting edges, and replace edge cuts by $\mathbb{F}_2$-rank cuts. For any graph $G$, this implies that
\begin{align}
\tfrac1{72}\,\mathrm{rankwidth}(G)\;\le\;
\max_{H\subseteq G}\operatorname{min-bal-cutrank(H)}
\;\le\;
\mathrm{rankwidth}(G).
\end{align}

\noindent Here, the right inequality holds because for any graph $H$ and any rank-decomposition of $H$ of width $k$, there exists a balanced cut $(A, B)$ with $|A|,|B|\geq |V(H)|/3$ such that $\operatorname{cutrank}_H(A,B) \leq k$. Therefore, for every induced $H\subseteq G$, the minimum cutrank (across balanced sets $A$ and $B$) is $\operatorname{cutrank}_H(A,B)\le \mathrm{rankwidth}(H)\le \mathrm{rankwidth}(G)$: by properties of tree decompositions, any rank-decomposition of width $k$ contains a balanced cut of rank at most $k$. The left inequality follows from Theorem \ref{thm:main}, since if $\mathrm{rankwidth}(G)\ge 72r$, then
there is an induced $H$ satisfying $\min_{(A,B)}\operatorname{cutrank}_H(A,B)\ge r$, for some balanced partition $(A,B)$ of $V(H)$.\\

Oum’s vertex-minor theorem states that rankwidth~$k$ forces a $K_t$ minor with $t\approx2^k$ and provides a set of excluded vertex-minors for graphs of bounded rankwidth, which is analogous to the Robertson-Seymour theorem for graph minors \cite{Oum_Seymour_2006}.  
In contrast, our result finds an {induced} subgraph such that  {every} balanced separation of the subgraph has rank $\Theta(k)$. Thus, graphs with large width must contain a highly connected induced subgraph with robust cutrank. Dvořák and Norin show that large tree-width forces large minimum balanced \emph{edge}-separators~\cite{Dvorak_Norin_2018}; in our work, we show how to transfer this to vertex-rank cuts. Finally, Oum and Seymour provide approximation algorithms for rankwidth \cite{Oum_Seymour_2006}; in this spirit, Theorem \ref{thm:main} supplies a proof of existence of an alternative certificate that may lead to new parameter-testing routines. \\

Through our result, we suggest rank-expansion as a candidate notion of expansion suitable for dense graphs. Indeed, our result can analogously be viewed as a Cheeger-type inequality for dense graphs \cite{Cheeger1970,AlonMilman1985}, which adds to the growing literature in the field \cite{khetan2018cheegerinequalitiesgraphlimits,jost2024cheegerinequalitiessimplicialcomplexes,10.1145/3564246.3585139,briani2022classcheegerinequalities, 10.1145/2582112.2582118, schild2018schurcomplementcheegerinequality, first2025cheegerinequalitycoboundaryexpansion}. For instance, Cheeger gives that $c_1(1 - \lambda_2) \leq \phi(G) \leq c_2\sqrt{1-\lambda_2}$, where $\phi(G)$ is the conductance of $G$, and $\lambda_2$ is the second largest (in absolute value) eigenvalue of $G$. Conversely, Theorem \ref{thm:main} gives $\frac{1}{72}\operatorname{rankwidth}(G) \leq \rho(G) \leq \operatorname{rankwidth}(G)$, where $\rho(G)$ is the minimum balanced cutrank of any subgraph of $G$, i.e. $\rho(G) = \max_{H\subseteq G} \operatorname{min-bal-cutrank}_H(A,B)$. In this vein, our work is most related to that of \cite{K_hn_2014} which shows a different result that dense regular graphs can be partitioned into robust expanders, which are subgraphs that still have good expansion properties after removing some vertices and edges. In turn, this can be applied to Hamiltonicity problems in dense regular graphs.

\paragraph{Proof overview.} Our proof proceeds by way of contradiction. If a graph has high rankwidth but low min-bal-cutrank, then every part of the graph can be cleanly separated with a low-rank cut. We introduce the tangle in definition \ref{def: rank-tangle of order k}, which points to the `more connected' component of a separation of a graph. We use it to find a separation $(X,Y)$ such that the tangle-selected component is maximally connected. Subject to this, we assume $X$ is maximal and call it the {small side}, following the convention of \cite{Dvorak_Norin_2018}. We then \emph{compress} $X$ by selecting a subset of {representative} vertices, and show that finding a low-rank balanced cut in this compressed graph would allow us to extend $X$, which contradicts its maximality. Our proof continues via three stages.\\

First, in Claim \ref{claim:rank-tangle existence}, we adapt the tangle-and-congestion machinery of Dvořák and Norin \cite{Dvorak_Norin_2018} to extract the more strongly-connected side $X$ of a low-rank separation $(X,Y)$ that is maximally large yet \emph{well-behaved}. This well-behaved property formalizes the idea that $X$ is robustly connected to $Y$, meaning no small component of $X$ can be separated from the rest of the graph with a low-rank cut. Next, we observe that the number of distinct neighborhood patterns that vertices in $Y$ can form within $X$ is bounded. This allows us to find a small subset of representative vertices in $X$ that captures all of these connections, effectively compressing $X$ without losing essential rank information. Specifically, we compress $X$ by taking one representative per $Y$-neighborhood class (Claim \ref{claim:findingH}), which preserves $\operatorname{cutrank}(X,Y)$; then, in Lemma \ref{lemma: splitting Y reduction}, we form an induced subgraph $H'$  so that any balanced cut of $H'$ of rank at most $r$ splits $Y$ and yields two lifted separations of $G$ whose cutranks are at most $72r$; combined with the tangle axiom this yields the existence of a strongly-connected side in $V(G)$ that is strictly larger than $X$, thereby contradicting the maximality assumption.\\

\section{Preliminaries}\label{sec:prelim}

\paragraph{Graphs and matrices.} The graphs we consider are all finite, simple, and undirected.
For a graph $G=(V,E)$ and disjoint $X,Y\subseteq V$, the {biadjacency matrix} $\operatorname{Adj}_G(X,Y)$ is the $|X|\times|Y|$ Boolean matrix whose $(i,j)$'th entry is $1$ iff $(i,j)\in E$.  Unless noted otherwise, the matrix rank is taken over $\mathbb{F}_2$.

\begin{definition}[Cutrank]
For a partition $(X,Y)$ of $V(G)$, the \emph{cutrank} is
\begin{align}
\operatorname{cutrank}_G(X,Y)=\operatorname{rank}\bigl(\operatorname{Adj}_G(X,Y)\bigr).
\end{align}
\end{definition}

\begin{definition}[Balanced partition]
A partition $(X,Y)$ of $V(G)$ is \emph{$(\alpha,\beta)$-balanced} if $|X| \ge \alpha|V(G)|$ and $|Y|\le\beta|V(G)|$.  
We focus on the canonical $(1/3, 2/3)$-balanced case and refer to such partitions as \emph{balanced}. 
In other words, a partition $(X,Y)$ of graph $V(G)$ is balanced if $X\cup Y = V(G), X\cap Y = \emptyset$, and $|V(G)|/3 \leq |X|,|Y| \leq 2|V(G)|/3$.  
\end{definition}

\begin{definition}[Minimum Balanced Cutrank of Graph $G$]
\begin{align}
\operatorname{min-bal-cutrank(G)} = \min_{(X,Y) \text{ of $V(G):|X|, |Y| \geq \frac{V(G)}{3}$}}\operatorname{cutrank}(X,Y)\end{align}
\end{definition}

\noindent Now, we define some terminology that is crucial to our proof methodology.

\begin{definition}[Rank-tangle of order $k$] \label{def: rank-tangle of order k}
Following Robertson-Seymour \cite{ROBERTSON1991153}, a rank-tangle of order $k$ is a set $\mathcal{T}$ of subsets of $V(G)$ such that \begin{enumerate}\item For every partition $(X,Y)$ of $V(G)$ with $\operatorname{cutrank}_G(X,Y)<k$, exactly one of $X$ or $Y$ is in $\mathcal{T}$ (and is called the ``small side'')\item No three chosen small sides have union $V(G)$; i.e., for any sets $S_1, S_2, S_3 \in \mathcal{T}$, their union $S_1 \cup S_2 \cup S_3 \neq V(G)$.\end{enumerate}
The existence of a rank-tangle of order $k$ witnesses that $\mathrm{rankwidth}(G)\geq k$.
\end{definition}

 Given that we seek to find a separation $(X,Y)$ where $X$ is not only large but also cannot be split from $Y$ by a low-rank cut that isolates a small part of $X$, we formalize a notion of \emph{robustness} of separations.

\begin{definition}[Well-behaved separations] We say that a partition $(X,Y)$ of $V(G)$ is $(s,\epsilon)$-well-behaved if for every partition $(A,B)$ of $X$ where $\operatorname{cutrank}(A, Y) \leq 70r$ and $|A| \leq 6s$, we have that $\operatorname{cutrank}(A \cup Y,B) \geq 6\epsilon (70r)$. In general, we consider the setting of $(3|Y|, 1/7)$-well-behaved separations. \label{def: well-behaved separations}
\end{definition}

Our theorem, stated informally, is that there exists a function $f:\mathbb{N}\to\mathbb{N}$ such that $\mathrm{rankwidth}(G)$ is at most $f(\max_{H\subseteq G}  \operatorname{min-bal-cutrank}(H))$.
\noindent We dedicate Section \ref{sec: proof_approach} to the proof of the theorem. More formally, we show:
\MainThm*

To study a corollary of our result, we introduce the notions of tree-width and minimum balanced separations.

\begin{definition}[Tree Decomposition of Graph]
    A tree decomposition of a graph $G=(V,E)$ is a tree $T$ with nodes $X_1 \cup X_2 \cup \dots \cup X_n = V(G)$ satisfying:
    \begin{itemize}
        \item If $v\in X_i \cap X_j$, then all nodes $X_k$ of $T$ in the unique path between $X_i$ and $X_j$ contain $v$ as well,
        \item For all $(u,v)\in E(G)$, there is a subset $X_i$ containing $u$ and $v$.
    \end{itemize}
\end{definition}

\begin{definition}[Tree-width]
    The \emph{width} of a tree decomposition is the size of its largest set $X_i$ minus one. The tree-width\footnote{This is equivalently referred to as the order of a certain bramble in \cite{Dvorak_Norin_2018}: a set $\mathcal{B}$ of non-empty subsets of $V(G)$ is a bramble if the induced subgraph $G[S]$ is connected for all $S\in \mathcal{B}$ and the induced subgraph of $G[S_1\cup S_2]$ is connected for all $S_1,S_2\in \mathcal{B}$. Then, a graph has tree-width at least $k-1$ if and only if it has a bramble of order $k$ \cite{Dvorak_Norin_2018}.} $tw(G)$ is the minimum width among all possible tree decompositions of $G$.
\end{definition}

\begin{lemma}[Dvořák-Norin \cite{Dvorak_Norin_2018}] If every subgraph of a graph $G$ has a balanced
separation of order at most $a$, then $G$ has treewidth at most $15a$.
\end{lemma}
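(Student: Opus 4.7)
The plan is to prove the lemma by strong induction on $|V(G)|$, with the statement strengthened to a recursion-friendly form: for every subset $W \subseteq V(G)$ with $|W| \le 3a$, the graph $G$ admits a tree decomposition of width at most $15a$ in which some bag contains $W$. Taking $W = \emptyset$ recovers the lemma. The base case is $|V(G)| \le 15a + 1$, where the single bag $V(G)$ suffices.

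For the inductive step, invoke the hereditary balanced-separation hypothesis, applied either to $G$ itself or to the subgraph $G - W$, to obtain a separation $(X_1, X_2)$ with separator $S = X_1 \cap X_2$ of size at most $a$, such that $|X_1 \setminus X_2|, |X_2 \setminus X_1| \le \tfrac{2}{3}|V(G)|$. Form the root bag $R = W \cup S$, of size at most $4a$. Partition the components of $G - S$ into groups $C_1,\ldots,C_m$ satisfying both $|C_i| \le \tfrac{2}{3}|V(G)|$ and $|W \cap C_i| \le 2a$, achievable by a greedy packing argument. For each group, apply the induction to $G[C_i \cup S]$ with new boundary $W_i = S \cup (W \cap C_i)$, which has size at most $3a$, and attach each resulting subtree to $R$ at a bag containing $W_i$. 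Validity of the decomposition follows because $S$ separates the groups and is contained in the root bag.

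The main obstacle is extracting the constant $15$ rather than an $O(a \log n)$-factor. Under the naive analysis, the boundary $W$ grows by $|S| \le a$ per recursion level while the subgraph shrinks only by a factor of $2/3$, giving logarithmic depth and thus $\Omega(a \log n)$ accumulated bag sizes along a root-to-leaf path. Obtaining a constant-factor bound requires a more sophisticated accounting: for instance, tracking a potential function combining $|V(G)|$ and $|W|$, and applying the balanced-separation hypothesis to carefully selected subgraphs of $G$ (not just $G$ itself) so that this potential shrinks geometrically at each level. The specific constant $15$ then emerges from the interplay between the $2/3$ balance parameter, the separator size $a$, and the boundary budget $3a$, and I would expect this refined analysis—rather than the recursive construction itself—to be the deepest technical step. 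An alternative route is the contrapositive via brambles: if $\mathrm{tw}(G) > 15a$, then $G$ has a bramble of order at least $15a+2$, and one shows that a carefully chosen subgraph supporting this bramble admits no balanced separation of order $\le a$, contradicting the hypothesis.
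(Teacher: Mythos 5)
The paper does not prove this lemma; it cites it verbatim from Dvořák and Norin, whose argument is a bramble/tangle-and-congestion contradiction---essentially the same scheme this paper then adapts for rankwidth in Claim~\ref{claim:rank-tangle existence}. Your proposal instead commits to the recursive tree-decomposition route and only mentions the bramble contrapositive as a parenthetical ``alternative,'' so it diverges from the route Dvořák and Norin (and, implicitly, this paper) actually take, and you do not develop that alternative.

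More importantly, the recursion as written has a concrete gap. The claim that the components of $G-S$ can be grouped into $C_1,\ldots,C_m$ with $|W\cap C_i|\le 2a$ ``by a greedy packing argument'' is false in general: the separator $S$ furnished by the hypothesis is balanced with respect to $V(G)$ (or $V(G-W)$), not with respect to $W$, so a single component of $G-S$ may contain all $3a$ vertices of $W$, and no grouping of indivisible components can reduce its $W$-load below $2a$. You cannot repair this by applying the hypothesis to $G[W]$ either, since a balanced separation of the subgraph $G[W]$ need not separate $W$ inside $G$. Consequently the invariant $|W|\le 3a$ cannot be maintained, which is exactly the mechanism by which $W$ grows by $|S|\le a$ per level and the bound degrades to $O(a\log n)$. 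You do correctly identify this $O(a\log n)$ failure mode, but you present it as an accounting nuisance to be overcome with a ``potential function,'' when in fact it is precisely the point at which the recursive construction you wrote down breaks, and no potential-function bookkeeping rescues a construction whose recursive invariant is unachievable. Obtaining the constant $15$ genuinely requires the tangle/bramble maximality argument: assume $\mathrm{tw}(G)>15a$, take a bramble of sufficiently large order, choose a small-order separation whose bramble-small side is maximal and well-behaved, and then show that a balanced separator of the complementary side would let you enlarge that small side, contradicting maximality. That is Dvořák and Norin's proof, and it is what you would need to spell out.
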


\begin{lemma}[Dvořák-Norin \cite{Dvorak_Norin_2018}]\label{lemma: dvorak-norin rankwidth treewidth} If rank-width(G) $\leq k$ and $G$ has no $K_{t,t}$ subgraph, then there exists a function $f:\mathbb{N}\times\mathbb{N}\to\mathbb{Z}_+$ for which $tw(G) \leq f(k, t)$.
\end{lemma}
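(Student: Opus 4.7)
The plan is to reduce the statement to the balanced-separation criterion for treewidth recorded in the excerpt (the first Dvořák-Norin lemma). The route is three steps: (i) extract a single balanced low-rank cut from a rank-decomposition of $G$; (ii) convert that $\mathbb{F}_2$-rank cut into a small balanced \emph{vertex} separator using the absence of $K_{t,t}$; and (iii) feed the resulting separator bound into the balanced-separation lemma, closing the recursion because both hypotheses are preserved on induced subgraphs.

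For step~(i), since $\mathrm{rankwidth}(G) \leq k$, $G$ admits a rank-decomposition $(T,\gamma)$ in which every edge-induced cut has cutrank at most $k$. A standard centroid argument on the subcubic tree $T$ (the same one invoked in the right inequality after Theorem~\ref{thm:main}) produces an edge $e \in E(T)$ whose cut $(X,Y)$ satisfies $|X|,|Y| \leq 2|V(G)|/3$ and $\operatorname{cutrank}_G(X,Y) \leq k$.

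For step~(ii), because the biadjacency matrix of $(X,Y)$ has $\mathbb{F}_2$-rank at most $k$, the set $\{N_X(y) : y \in Y\} \subseteq \mathbb{F}_2^X$ has at most $2^k$ distinct vectors, partitioning $Y$ into equivalence classes $Y_1,\dots,Y_m$ with $m \leq 2^k$, where all vertices of $Y_i$ share a common neighborhood $N_i \subseteq X$. The no-$K_{t,t}$ hypothesis forbids $|Y_i| \geq t$ and $|N_i| \geq t$ simultaneously, so for every $i$, either $|Y_i| < t$ or $|N_i| < t$. Define
\begin{align}
S \;=\; \bigcup_{i:\, |N_i| < t} N_i \;\cup\; \bigcup_{i:\, |Y_i| < t} Y_i, \qquad |S| \;\leq\; 2(t-1)\cdot 2^k.
\end{align}
Any edge of $G$ from $X\setminus S$ to $Y\setminus S$ would have its $Y$-endpoint in some $Y_i$ with $|Y_i| \geq t$, forcing $|N_i| < t$ and hence $N_i \subseteq S$, contradicting that its $X$-endpoint lies outside $S$. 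Thus $S$ is a vertex separator of $G$; since $X\setminus S \subseteq X$ and $Y\setminus S \subseteq Y$ each have size at most $2|V(G)|/3$, this is a balanced separation of $G$ of order at most $2t\cdot 2^k$.

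For step~(iii), rankwidth is induced-subgraph monotone and $K_{t,t}$-freeness is preserved under every subgraph operation, so steps~(i)--(ii) run verbatim on each induced subgraph $H \subseteq G$ and yield a balanced separation of $H$ of order at most $a(k,t) := 2t\cdot 2^k$. The first Dvořák-Norin lemma then gives $tw(G) \leq 15\,a(k,t) = 30\,t\cdot 2^k$, so $f(k,t) := 30\,t\cdot 2^k$ suffices. The main obstacle is the bridge in step~(ii): the algebraic $\mathbb{F}_2$-rank bound must be turned into a combinatorial neighborhood-count statement, and the lever is the elementary fact that $\mathbb{F}_2$-rank~$k$ forces at most $2^k$ distinct columns, which dovetails precisely with the $K_{t,t}$ exclusion to extract a small vertex separator. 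A secondary routine check is that the balanced-separation characterization of treewidth is well-known to work with the induced-subgraph quantifier, matching our hypothesis closure properties.
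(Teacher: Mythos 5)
Your proposal is correct, but note that the paper itself offers no proof of this lemma --- it is quoted verbatim from Dvo\v{r}\'ak--Norin \cite{Dvorak_Norin_2018} --- so there is no internal argument to compare against; what you have written is a self-contained derivation in the spirit of the cited source (and of the Gurski--Wanke-style reductions it builds on). Your three steps check out: the centroid edge of a subcubic rank-decomposition tree gives a $\bigl(\tfrac13,\tfrac23\bigr)$-balanced cut $(X,Y)$ of cutrank at most $k$; rank at most $k$ over $\mathbb{F}_2$ forces at most $2^k$ distinct columns, hence at most $2^k$ neighborhood classes $Y_i$ with common neighborhoods $N_i$; excluding $K_{t,t}$ gives the dichotomy $|Y_i|<t$ or $|N_i|<t$, and your set $S$ of size at most $2(t-1)2^k$ indeed meets every $X$--$Y$ edge surviving outside $S$ (if $y\notin S$ then $|Y_i|\ge t$, so $N_i\subseteq S$, catching the $X$-endpoint), yielding a balanced separation of order at most $2t\cdot 2^k$. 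The closure argument is also handled correctly: rankwidth is monotone under vertex deletion, $K_{t,t}$-freeness under any subgraph, and the ``every subgraph'' quantifier in the $15a$ balanced-separation theorem reduces to induced subgraphs because deleting edges can only help a separation on the same vertex set. The only cosmetic points are degenerate cases (graphs with at most two vertices, where the tree may have no internal edge, are trivially fine) and the fact that your bound $30t\cdot 2^k$ is one explicit choice of $f(k,t)$, which is all the statement requires.
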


Then, due to Lemma \ref{lemma: dvorak-norin rankwidth treewidth} and Theorem \ref{thm:main}, we arrive at the following corollary.

\begin{corollary} There exists a function $g$ so that if for every subgraph of a graph $G$ has a balanced separation of order at most $a$, then the minimum balanced cutrank of $G$ is at most $g(a)$. \\
\end{corollary}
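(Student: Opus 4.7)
The plan is to derive the corollary from the contrapositive of Theorem~\ref{thm:main}, after showing directly that every induced subgraph of $G$ inherits a low-rank balanced bipartition from its low-order balanced vertex separation.

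First, I would convert a balanced vertex separation into a balanced cutrank. Given any induced subgraph $H\subseteq G$ with $|V(H)|=n\ge 2$, the hypothesis provides a separator $S\subseteq V(H)$ with $|S|\le a$ whose removal breaks $V(H)\setminus S$ into components each of size at most $2n/3$. A greedy assignment groups these components into two parts $X,Y$ with $|X|,|Y|\le 2n/3$; one then splits $S$ into $S_X\sqcup S_Y$ so that both $|X\cup S_X|$ and $|Y\cup S_Y|$ lie in $[n/3,2n/3]$ (a short case analysis on whether $|X|$ or $|Y|$ falls below $n/3$, using $|X|+|Y|+|S|=n$, confirms this is always achievable). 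Since no edge of $H$ crosses between different components of $H-S$, the biadjacency matrix $\operatorname{Adj}_H(X\cup S_X,\,Y\cup S_Y)$ has its nonzero rows confined to $S$, so its $\mathbb{F}_2$-rank is at most $|S|\le a$. Thus $\operatorname{min-bal-cutrank}(H)\le a$ for every induced $H\subseteq G$.

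Next, I would apply the contrapositive of Theorem~\ref{thm:main}: since $\max_{H\subseteq G}\operatorname{min-bal-cutrank}(H)\le a$, taking $r=a+1$ in the theorem forces $\mathrm{rankwidth}(G)<72(a+1)$. Combined with the trivial inequality $\operatorname{min-bal-cutrank}(G)\le\mathrm{rankwidth}(G)$ established immediately after Theorem~\ref{thm:main}, this yields $\operatorname{min-bal-cutrank}(G)\le 72a+71$, so the function $g(a)=72a+71$ suffices.

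The main obstacle is the balance bookkeeping in the first step, specifically verifying that the separator $S$ can always be split between the two sides so that both $|X\cup S_X|,|Y\cup S_Y|\in[n/3,2n/3]$. A routine case split on the relative sizes of $|X|,|Y|,|S|$ handles this, and the degenerate regime $n<3a$ is trivial since $\operatorname{min-bal-cutrank}(H)\le n<3a$ automatically. One could alternatively bypass Theorem~\ref{thm:main} by invoking the Dvořák--Norin bound $\operatorname{tw}(G)\le 15a$ together with the standard inequality $\mathrm{rankwidth}\le\operatorname{tw}+1$, and then pass through Lemma~\ref{lemma: dvorak-norin rankwidth treewidth} for the $K_{t,t}$-free case; but the contrapositive of Theorem~\ref{thm:main} gives a cleaner route that does not require any $K_{t,t}$-freeness hypothesis.
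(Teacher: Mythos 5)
Your argument is correct, but notice that it proves considerably more than it uses. In your first step you show directly that every induced subgraph $H\subseteq G$ admits a balanced partition $(X\cup S_X,\,Y\cup S_Y)$ of $\mathbb{F}_2$-cutrank at most $|S|\le a$: the block $\operatorname{Adj}_H(X,Y)$ vanishes because $X$ and $Y$ lie in distinct components of $H-S$, so the only nonzero contributions come from the $|S_X|$ rows and $|S_Y|$ columns indexed by the separator, and subadditivity gives rank at most $|S_X|+|S_Y|=|S|\le a$. Applying this with $H=G$ already yields $\operatorname{min-bal-cutrank}(G)\le a$, so $g(a)=a$ suffices. Your subsequent invocation of the contrapositive of Theorem~\ref{thm:main} together with $\operatorname{min-bal-cutrank}(G)\le\mathrm{rankwidth}(G)$ is logically sound but superfluous, and it degrades the bound from $a$ to $72a+71$ without buying anything.

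The paper itself gives no written proof, only the remark that the corollary is ``due to Lemma~\ref{lemma: dvorak-norin rankwidth treewidth} and Theorem~\ref{thm:main}.'' That citation is murky: Lemma~\ref{lemma: dvorak-norin rankwidth treewidth} goes from bounded rankwidth (plus $K_{t,t}$-freeness, a hypothesis the corollary does not have) to bounded treewidth, which is the wrong direction here. The intended route was presumably the preceding, unlabeled Dvořák--Norin lemma (balanced vertex separators of order $a$ in every subgraph force $\operatorname{tw}(G)\le 15a$), then $\mathrm{rankwidth}(G)\le\operatorname{tw}(G)+1$, then the inequality $\operatorname{min-bal-cutrank}(G)\le\mathrm{rankwidth}(G)$ recorded after Theorem~\ref{thm:main}. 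Your direct translation from a balanced vertex separator to a balanced low-rank cut bypasses all of that machinery (including Theorem~\ref{thm:main} entirely) and is both simpler and sharper; you could stop after the first paragraph of your proposal.
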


\section{Remarks on Neighborhood Equivalence Classes}
\label{sec: equivalence class neighborhood}  

\begin{definition}
    Suppose $(X, Y)$ is a partition of $V(G)$. For each $u,v\in Y$, we define equivalence classes, where each class has the relation $u\sim v$ iff $N_G(u)\cap X = N_G(v) \cap X$. The family $\mathcal F=\{\,N_G(y)\cap X : y\in Y\}$ of subsets of $X$ has \emph{VC-dimension} $\operatorname{VC}(\mathcal F)$, the largest $S\subseteq X$ shattered by $\mathcal F$.  
Then, Sauer–Shelah's lemma \cite{SAUER1972145,pjm1102968432} bounds $|\mathcal F|$ (the number of $X$-equivalence classes) by $\sum_{i=0}^{d}\binom{|X|}{i}=O(|X|^{d})$ where $d=\operatorname{VC}(\mathcal F)$.
\end{definition}
Let the number of equivalence classes be denoted by $N_Y$. Then, we have that $\log(N_Y) \leq \operatorname{cutrank}(X,Y) \leq {N_Y}$ since the rank cannot exceed the number of distinct column types (equivalence classes), and over $\mathbb{F}_2$, $k$ dimensions can encode at most $2^k$ distinct neighborhood patterns. Furthermore, if $N_Y \leq O(n^{1-\epsilon})$, then a number of separator theorems emerge, due to the works of Feige, Hajiaghayi, and Lee  \cite{Feige_Hajiaghayi_Lee_2008}.

\noindent Let $X'$ be the minimal cardinality subset of $X$ such that \[\rank(\mathrm{Adj}(X',Y)) = \rank(\mathrm{Adj}(X,Y)).\] Equivalently, $|X'| = \mathrm{cutrank}(X,Y)$. Then, the number of equivalence classes to $X$ is equal to the number of equivalence classes to $X'$.
\begin{lemma}\label{lemma: equivalence class vc}
    The number of equivalence classes to $X$ is equal to the number of equivalence classes to $X'$.
\end{lemma}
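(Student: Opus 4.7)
The plan is to translate the equivalence-class count into a linear-algebraic statement about the biadjacency matrix over $\mathbb{F}_2$, and then exploit the rank preservation that defines $X'$. Concretely, the number of $X$-classes on $Y$ equals the number of distinct columns of $\operatorname{Adj}_G(X,Y) \in \mathbb{F}_2^{|X|\times|Y|}$, and similarly the number of $X'$-classes equals the number of distinct columns of $\operatorname{Adj}_G(X',Y)$. Since $X' \subseteq X$, the map that restricts each column to its $X'$-coordinates is well-defined on columns and can only collapse distinct columns; this gives immediately that the number of $X'$-classes is at most the number of $X$-classes.

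For the reverse direction, I would use the defining property of $X'$. By assumption, $\operatorname{rank}(\operatorname{Adj}_G(X',Y)) = \operatorname{rank}(\operatorname{Adj}_G(X,Y))$, and $\operatorname{Adj}_G(X',Y)$ is obtained from the full matrix by selecting the rows indexed by $X'$. Its row space therefore sits inside the row space of $\operatorname{Adj}_G(X,Y)$ as a subspace of the same $\mathbb{F}_2$-dimension, so the two row spaces coincide in $\mathbb{F}_2^{|Y|}$. Consequently, for every $x \in X$ there exist coefficients $c_{x,x'} \in \mathbb{F}_2$ so that $\operatorname{Adj}_G(\{x\},Y) = \sum_{x'\in X'} c_{x,x'} \operatorname{Adj}_G(\{x'\},Y)$ as row vectors.

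The remaining step is a one-line contrapositive. Suppose $u,v \in Y$ lie in the same $X'$-equivalence class, i.e., $\operatorname{Adj}_G(x',u) = \operatorname{Adj}_G(x',v)$ for every $x' \in X'$. Evaluating the linear-combination identity above at columns $u$ and $v$ yields $\operatorname{Adj}_G(x,u) = \operatorname{Adj}_G(x,v)$ for every $x \in X$, so $u$ and $v$ already lay in the same $X$-class. Hence the two equivalence partitions of $Y$ literally coincide, which is strictly stronger than the claimed equality of cardinalities. The only step that warrants a moment's thought is the row-space equality, which genuinely requires the rank-preservation hypothesis (and in fact the minimality is not needed for this lemma, only rank preservation); everything else is bookkeeping, so I do not foresee a real obstacle.
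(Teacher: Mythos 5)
Your proof is correct and follows essentially the same route as the paper: both observe that rank preservation together with $X'\subseteq X$ forces the row space of $\operatorname{Adj}_G(X',Y)$ to equal that of $\operatorname{Adj}_G(X,Y)$, and hence that two columns agree on $X'$-coordinates if and only if they agree on all of $X$. You spell out the linear-combination step a bit more explicitly and correctly note that only rank preservation (not minimality of $|X'|$) is used, but the argument is the same.
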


\begin{proof} 
Let $M = \mathrm{Adj}(X,Y)$ and $M' = \mathrm{Adj}(X',Y)$. Since $|X'| = \mathrm{cutrank}(X,Y)$, the rows of $M'$ form a basis for the row space of $M$. For any two vertices $y_1, y_2 \in Y$, let $c_1$ and $c_2$ be their corresponding columns in $M$. The vertices are equivalent in $X$ if and only if $c_1 = c_2$ as vectors in $\mathbb{F}_2^X$. Since the rows of $X'$ span the row space of $M$, we have that $c_1 = c_2$ in $\mathbb{F}_2^X$ if and only if their restrictions to $X'$ are equal, i.e., if and only if the corresponding columns in $M'$ are equal. Therefore, $y_1$ and $y_2$ are equivalent with respect to $X$ if and only if they are equivalent with respect to $X'$. This establishes a bijection between the equivalence classes, so the number of equivalence classes to $X$ equals the number to $X'$.\qedhere\\
\end{proof}

\section{Proof approach}
\label{sec: proof_approach}

\MainThm*

\begin{proof}

Let $G$ be a graph with rankwidth$(G)\geq 72r$, and suppose towards a contradiction that every induced subgraph of $G$ has min-bal-cutrank strictly less than $r$. Then, there exists a rank-tangle of order $72r$. Fix one and call it $\tau$.\\

\subsection{The Maximal Well-Behaved Separation}
We begin by orienting low‑rank separations using the rank‑tangle $\tau$. Among all separations of cutrank strictly less than $70r$ that $\tau$ orients towards its ``small side,'' we choose one where the small side is maximal by size (and subject to this, has minimal cutrank). The next claim records the robustness properties that this choice enforces.\\

\begin{claim}
\label{claim:rank-tangle existence}
    There exists a partition $(X,Y)$ of $V(G)$ such that $\operatorname{cutrank}(X,Y) < 70r$, the rank-tangle $\tau$ of order $72r$ orients this partition towards $X$ (i.e., $X \in \tau$), and $(X,Y)$ is $(3|Y|, 1/7)$-well-behaved. Moreover, $|X|$ is maximum among all separations oriented by $\tau$ with $\mathrm{cutrank}<70r$ (and, subject to this, $\mathrm{cutrank}(X,Y)$ is minimum).
\end{claim}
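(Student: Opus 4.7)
First, I would establish that the collection of $\tau$-oriented separations with cutrank strictly less than $70r$ is nonempty. The trivial partition $(\emptyset, V(G))$ has cutrank $0<70r$, and tangle axiom (2) forces $\emptyset \in \tau$ (otherwise the triple $(V(G), V(G), V(G))$ would violate the covering axiom). Since $V(G)$ is finite, this collection is a nonempty finite set, so a partition $(X,Y)$ that maximizes $|X|$ and, subject to that, minimizes $\operatorname{cutrank}(X,Y)$ exists. The remaining work is to show that such an extremally chosen $(X,Y)$ is automatically $(3|Y|, 1/7)$-well-behaved.

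\paragraph{Well-behavedness via contradiction.} Suppose that $(X, Y)$ fails to be $(3|Y|, 1/7)$-well-behaved. Unpacking Definition~\ref{def: well-behaved separations}, there is a partition $(A, B)$ of $X$ with $\operatorname{cutrank}(A, Y) \leq 70r$, $|A| \leq 18|Y|$, and $\operatorname{cutrank}(A \cup Y, B) < 60r$. Then $(A \cup Y, B)$ is a separation of $V(G)$ with cutrank strictly below $60r < 72r$, so $\tau$ orients it. Axiom (2) rules out $A \cup Y \in \tau$, because $X \cup (A \cup Y) \supseteq X \cup Y = V(G)$ would make the triple $(X, X, A \cup Y)$ violate the covering condition; hence $B \in \tau$.

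\paragraph{The main obstacle.} The final and most delicate step is to convert the presence of $B \in \tau$ (with $B \subsetneq X$) into a contradiction with either the maximality of $|X|$ or the minimality of $\operatorname{cutrank}(X,Y)$. My approach is to construct a new separation $(X^\star, Y^\star)$ oriented by $\tau$ with $\operatorname{cutrank}(X^\star, Y^\star) < 70r$ and either $|X^\star| > |X|$, or $|X^\star| = |X|$ together with strictly smaller cutrank. A natural candidate is $X^\star = X \cup Y_0$ for a carefully chosen $Y_0 \subseteq Y$, exploiting the bound $\operatorname{cutrank}(A, Y) \leq 70r$ (which constrains the $\mathbb{F}_2$-column space of the biadjacency matrix from $Y$ into $A$) together with the submodularity of $\lambda(S) := \operatorname{cutrank}(S, V(G)\setminus S)$. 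The hypothesis $|A| \leq 18|Y|$ is what keeps $|X^\star|$ above $|X|$, and the slack between $60r$ and $70r$ supplies enough room for $\lambda(X^\star)$ to stay below $70r$. Showing that $X^\star \in \tau$ would then combine axiom (1) on the new separation with axiom (2) against the already-known tangle members $X$ and $B$ to pin down the correct orientation. The main difficulty is calibrating the constants so that the submodular inequalities close with strict inequality; this is precisely where the constants $70r$, $60r$, $18$, and $1/7$ in the definition of well-behaved must earn their weight.
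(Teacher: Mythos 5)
Your setup is right — the extremal choice exists, and the tangle axiom forces $B\in\tau$ once a well-behavedness violation $(A,B)$ is exhibited — but the step you flag as the ``main obstacle'' is where your plan diverges from anything that can work, and you leave it as a sketch.

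Two concrete problems. First, your candidate $X^\star = X\cup Y_0$ starts from $X$, whose cutrank is only known to be $<70r$; there is essentially no slack to add vertices of $Y$ while staying under $70r$, and submodularity alone gives you no control. The paper instead \emph{restarts} from the smaller side $B$, whose separation $(B, A\cup Y)$ has cutrank $<60r$ (this is exactly the $6\epsilon(70r)=60r$ in Definition~\ref{def: well-behaved separations}), so there is $\approx 10r$ of headroom to spend. Second, and more fundamentally, your sketch never invokes the standing proof-by-contradiction hypothesis that \emph{every induced subgraph of $G$ has min-bal-cutrank $<r$}. That hypothesis is the only source of new low-rank cuts, and it is the engine of the paper's argument: setting $(X_0,Y_0)=(B,A\cup Y)$, the paper applies the hypothesis to $G[Y_i]$ to obtain a balanced cut $(U_1^{(i)},U_2^{(i)})$ of rank $<r$, absorbs one side to form $X_{i+1}=X_i\cup U_1^{(i)}$, and iterates eight times. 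Each step grows the cutrank by at most $r$ (so $\operatorname{cutrank}(X_8,Y_8)<60r+8r=68r<70r$), and each step absorbs at least a third of the remaining $Y_i$, so $|X_8|\ge |B|+0.96|A\cup Y|>|A|+|B|=|X|$, which needs precisely the size bound $|A|\le 18|Y|$. That is the contradiction with maximality. Your plan mentions the $18$ and the slack but has no mechanism to produce the amplification; without the iterated use of the contradiction hypothesis, the argument cannot close, so this is a genuine gap rather than a stylistic difference.
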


\begin{proof} 
Since $\tau$ is a rank-tangle of order $72r$, there exists a partition $(X,Y)$ of $V(G)$ of $\operatorname{cutrank}$ at most $70r$ such that $\tau$ considers $X$ to be the small side. Of all possible partitions $(X,Y)$, choose $(X,Y)$ such that $|X|$ is maximum. Subject to this, choose $(X, Y)$ such that $\operatorname{cutrank}$$(X, Y)$ is minimum. By way of contradiction, suppose $X$ is not $(3|Y|, 1/7)$-well-behaved. \\

If $(X,Y)$ is maximal but not well-behaved, then there exists a partition $(A,B)$ of $X$ such that that $\operatorname{cutrank}(A,Y)\leq 70r$, $|A| \leq 6s$, and $\operatorname{cutrank}(A \cup Y,B) < 60r$. The tangle $\tau$ must choose one of $A\cup Y$ or $B$ as the small side.  It cannot choose $A\cup Y$ because then $X$ and $A\cup Y$ would be two small sides whose union is $V(G)$ (since $X \cup (A\cup Y) = X\cup Y = V(G)$), which violates the tangle axiom (which states that no three small sides of the tangle $\tau$ can have union $V(G)$). Therefore, $\tau$ must choose $B$ as the small side.\\

Note that $|A\cup Y|\leq |Y|+6s \leq 19|Y|$. Suppose $(X_0, Y_0) = (B, A\cup Y)$. Since $G[Y_0]$ is an induced subgraph, by our initial contradiction assumption, it has a balanced partition $(U_1^{(0)},U_2^{(0)})$ with $\operatorname{cutrank}_{G[Y_0]}(U_1^{(0)}, U_2^{(0)}) < r$. Then 
\begin{align*}\operatorname{cutrank}(X_0\cup U_1^{(0)}, U_2^{(0)}) &\leq \operatorname{cutrank}(U_1^{(0)}, U_2^{(0)})+ \operatorname{cutrank}(U_1^{(0)}, X_0) \\ &< r+ \operatorname{cutrank}(A \setminus U_2^{(0)}, B) \\ &< 61r.\end{align*}

By a symmetric argument, $\operatorname{cutrank}(U_2^{(0)}, V(G)\setminus U_2^{(0)})<61r$. Thus both of the partitions $(X_0 \cup U_1^{(0)}, U_2^{(0)})$ and $(X_0 \cup U_2^{(0)}, U_1^{(0)})$ have $\operatorname{cutrank}$ less than $61r$. For each of these partitions, the tangle $\tau$ selects one of the sides to be small. Note that either $X_0 \cup U_1^{(0)}$ or $X_0 \cup U_2^{(0)}$ is the small side of $\tau$. Otherwise, $U_2^{(0)}$ and $U_1^{(0)}$ would be small sides of $\tau$, and $V(G)$ would be the union of three small sides, $U_1^{(0)}$, $U_2^{(0)}$, and $X_0$ which results in a contradiction. So up to symmetry between $U_1^{(0)}$ and $U_2^{(0)}$, we may assume that $X_0 \cup U_1^{(0)}$ is a small side of $\tau$. \\

Then, analogously to \cite{Dvorak_Norin_2018}, given $(X_i, Y_i)$ and a balanced low-rank cut $(U_1^{(i)}, U_2^{(i)})$ of $G[Y_i]$, we can construct a sequence $(X_{i+1},Y_{i+1}) = (X_i \cup U_1^{(i)}, U_2^{(i)})$ for $i\in \{0,\dots,7\}$ where $(U_1^{(i)},U_2^{(i)})$ is a balanced partition of rank at most $r$ in $G[Y_{i}]$. This mechanism allows us to amplify the size of the sets $X_i$ to get the final contradiction. . Iteratively, $X_7\cup U_1^{(0)}$ is a small side as well. Then, $X_0\subseteq\dots\subseteq X_8$ and $Y_0 \supseteq\dots\supseteq Y_8$. Note that for $i\in \{1,\dots,7\}$, we have due to subadditivity and
\begin{align*}
    \operatorname{cutrank}(X_{i+1}, Y_{i+1}) &= \operatorname{cutrank}(X_i \cup U_1^{(i)}, U_2^{(i)} \\
    &\leq \operatorname{cutrank}(X_i, U_2^{(i)}) + \operatorname{cutrank}(U_1^{(i)}, U_2^{(i)}) \\
    &< \operatorname{cutrank}(X_i, Y_i) + r,
\end{align*}
that recursively
\begin{align*}\operatorname{cutrank}(X_8, Y_8) &= \operatorname{cutrank}(X_7\cup U_1^{(7)}, U_2^{(7)}) \\ &\leq \operatorname{cutrank}(X_7, U_2^{(7)})+\operatorname{cutrank}(U_1^{(7)}, U_2^{(7)}) \\ &< \operatorname{cutrank}(X_7, U_2^{(7)})+r \\ &< \operatorname{cutrank}(X_7, Y_7)+r  \\ &< \operatorname{cutrank}(X_0, Y_0)+8r,\end{align*}
as removing vertices from one side can never increase the rank. \\

Therefore, $\operatorname{cutrank}$($X_8, Y_8$) $< 60r + 8r = 68r$. Further, from an earlier argument $\tau$ chooses $X_8$ to be the small side of $(X_8, Y_8)$. Then, since $|X| = |A| + |B|$ and $|X_8| \geq |B| + \sum_{i=0}^7 |U_1^{(i)}|$, noting that $(U_1^{(i)}, U_2^{(i)})$ is balanced in $G[Y_i]$ gives that $|U_1^{(i)}| \geq |Y_i|/3$. This gives a recurrence in the form of \[|Y_{i+1}| = |U_2^{(i)}| \leq \frac{2}{3}|Y_i|.\] So, $|Y_i| \leq (2/3)^i |Y_0|$ and
\begin{align*}|U_1^{(i)}| &\geq \frac{1}{3}|Y_i| \\
&\geq \frac{1}{2}\left(\frac{2}{3}\right)^i |Y_0| \\
&= \frac{1}{3}\left(\frac{2}{3}\right)^i |A\cup Y|.\end{align*} Together, this gives us that 
\begin{align*}|X_8| &\geq |B| + \frac{1}{3}\sum_{k=0}^7\left(\frac{2}{3}\right)^k|A\cup Y| \\ &\geq |B| + 0.96|A\cup Y|\end{align*}
since we remove at least $1/3$rd of the vertices of $U_2^i$ between successive iterations for $i\in\{0,\dots,7\}$. Furthermore, we have that \[|X|=|B\cup A|=|B|+|A|.\]
To obtain a contradiction, we show that $|X_8|> |X|$ which is equivalent to $|Y|\geq 0.04|A|$ as $A\cap Y=\emptyset$. Specifically, for our contradiction to hold, we need to show \begin{align*}|X_8| &> |B| + 0.96 |A\cup Y| \\
&= |B| + 0.96(|A| + |Y|).\end{align*} Since $|X| = |A| + |B|$, this is equivalent to showing $0.96|Y| \geq 0.04|A|$ (or $|Y| > \frac{|A|}{24}$).\\

However, since we show (the stronger) $|A|\leq 6s = 18|Y|$, the contradiction holds, i.e., we have derived that $X_8$ is a valid side of a partition of $V(G)$ such that $|X_8|>|X_0|$, which is a contradiction. \\

Therefore, we conclude that $(X,Y)$ is $(3|Y|, 1/7)$-well-behaved. \qedhere\\
\end{proof}

\subsection{The Compressed Subgraph $H$}
We now compress the small side without increasing the relevant cutranks. Our strategy is to find a contradiction to the maximality of $X$. To do this, we must analyze the connections between $X$ and $Y$. We begin by compressing $X$ into a smaller, representative set $X^*$ without losing the essential rank information, as justified by the neighborhood equivalence classes discussed in Section \ref{sec: equivalence class neighborhood}.\\

\begin{claim}
\label{claim:findingH}
    There exists an induced subgraph $H\subseteq G$ which is obtained from $G$ by deleting some vertices from $X$ such that for  all $v\in X$, there exists at least one vertex $v'\in X\cap V(H) \eqqcolon X^*$ such that $N(v)\cap Y = N(v')\cap Y$.
\end{claim}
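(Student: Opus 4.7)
My plan is to build $H$ by choosing, within $X$, exactly one representative per $Y$-neighborhood equivalence class in the sense of Section~\ref{sec: equivalence class neighborhood}. Concretely, I would define the relation $\sim$ on $X$ by $u\sim v$ iff $N_G(u)\cap Y = N_G(v)\cap Y$, let $X^{*}\subseteq X$ be any transversal of the resulting equivalence classes (one vertex per class, chosen arbitrarily), and set $H\coloneqq G[X^{*}\cup Y]$. Since $V(G)\setminus V(H)=X\setminus X^{*}\subseteq X$, this $H$ is obtained from $G$ by deleting only vertices of $X$, and $X\cap V(H)=X^{*}$, which matches the labelling in the claim.

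To verify the defining property, I would pick an arbitrary $v\in X$ and let $v'\in X^{*}$ be the representative of the $\sim$-class of $v$ (so $v'=v$ when $v$ itself was chosen as the representative). By the definition of $\sim$ we immediately get $N_G(v)\cap Y = N_G(v')\cap Y$, which is exactly what the claim asserts. Nothing else is needed.

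The claim is a one-line transversal argument, so there is no genuine obstacle to overcome at this step; I expect the routine part to be simply recording the construction. What makes the claim worth stating is the \emph{use} it will be put to afterwards: by Lemma~\ref{lemma: equivalence class vc} the compression preserves the cutrank, i.e.\ $\operatorname{cutrank}(X^{*},Y)=\operatorname{cutrank}(X,Y)$, and by the Sauer--Shelah discussion of Section~\ref{sec: equivalence class neighborhood} the cardinality $|X^{*}|$ is controlled by the $Y$-side VC-dimension. The real difficulty is deferred to Lemma~\ref{lemma: splitting Y reduction}, where one must show that any balanced low-rank cut of $H$ must split $Y$ non-trivially and can be lifted back to a cut of $G$ whose cutrank stays within the tangle threshold; the present claim simply fixes the compressed host graph on which that later argument will be run.
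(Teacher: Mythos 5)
Your proposal is correct and follows essentially the same route as the paper: both construct $H$ by keeping one representative vertex per $Y$-neighborhood equivalence class of $X$ and observe that the defining property then holds by construction. The only difference is cosmetic—you spell out the transversal and the verification step more explicitly than the paper, which states the construction in a single sentence.
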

\begin{proof}
We choose $H$ such that for every vertex $v'\in X$, there is at least one vertex $v \in X\cap V(H)$ so that $N(v)\cap Y = N(v')\cap Y$, thus satisfying the condition and giving $\operatorname{cutrank}(H\cap X, Y) = \operatorname{cutrank}(X, Y)$. In other words, $H$ is constructed by selecting one vertex per equivalence class (as defined in Section \ref{sec: equivalence class neighborhood}).
\qedhere\\
\end{proof}

Having replaced $X$ by $X^*$ (one representative per class), we obtain an induced subgraph $H$ with $\operatorname{cutrank}(X^*, Y) = \operatorname{cutrank}(X,Y) < 70r$. We now form an induced $H'$ in which every balanced low‑rank cut must split $Y$. This forces any balanced separation to interact non-trivially with the uncompressed section, $Y$.\\

\begin{lemma}[Splitting $Y$]
    \label{lemma: splitting Y reduction}
    There exists an induced subgraph $H'\subseteq H$ and a balanced partition $(A', B')$ of $H'$ such that $A' \cap Y \neq \emptyset$ and $B'\cap Y\neq\emptyset$, $\operatorname{cutrank}_{H'}(A', B') < r$, and if $X^{**} \eqqcolon V(H') \cap X^*$, then \[\operatorname{cutrank}_{H'}(X^{**}, Y) \leq \operatorname{cutrank}_H(X^*, Y) = \operatorname{cutrank}_G(X,Y) < 70r.\]
\end{lemma}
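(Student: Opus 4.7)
The plan is to build $H'$ iteratively by peeling off balanced cuts that lie entirely inside $X^*$, until the contradiction hypothesis finally forces a balanced cut that genuinely splits $Y$. Initialize $H' := H$ and $X^{**} := V(H') \cap X^* = X^*$. At each step, since $H'$ is an induced subgraph of $G$, the standing hypothesis that every induced subgraph of $G$ has minimum balanced cutrank strictly less than $r$ supplies a balanced partition $(A', B')$ of $V(H')$ with $\operatorname{cutrank}_{H'}(A', B') < r$. If both $A' \cap Y$ and $B' \cap Y$ are nonempty, return the current triple $(H', A', B')$; otherwise, after possibly swapping the two sides, $Y \subseteq B'$ and hence $A' \subseteq X^{**}$. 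In that case, replace $H'$ by the induced subgraph $H' - A'$ (shrinking $X^{**}$ accordingly) and repeat.

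For termination, the crucial observation is that whenever a step fails (i.e.\ $A' \subseteq X^{**}$), the balance requirement $|A'| \geq |V(H')|/3 = (|X^{**}| + |Y|)/3$ forces $|X^{**}| \geq |A'| \geq |Y|/2$. Each such failed step therefore removes at least $|V(H')|/3 \geq |Y|/3$ vertices from $X^{**}$ while leaving $|Y|$ unchanged, so $|X^{**}|$ strictly decreases by at least $|Y|/3$ per iteration. After at most $O(|X^*|/|Y|)$ steps we reach a configuration with $|X^{**}| < |Y|/2$, equivalently $|X^{**}| < |V(H')|/3$. In that regime no side of a balanced partition of $V(H')$ can fit inside $X^{**}$, so the low-rank balanced partition supplied by the hypothesis necessarily splits $Y$, and the iteration halts with the desired $(H', A', B')$.

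The remaining bound on cutrank is automatic: at every stage $H' \subseteq H$ is induced with $X^{**} \subseteq X^*$, so $\operatorname{Adj}_{H'}(X^{**}, Y)$ is a submatrix of $\operatorname{Adj}_H(X^*, Y)$, which immediately gives $\operatorname{cutrank}_{H'}(X^{**}, Y) \leq \operatorname{cutrank}_H(X^*, Y)$; combined with Claim~\ref{claim:findingH} this yields the required bound of $< 70r$. The main obstacle I anticipate is degeneracy bookkeeping at termination --- confirming that when the iteration halts the surviving graph is still large enough for a balanced partition to be well-defined and for both $A' \cap Y$ and $B' \cap Y$ to be individually nonempty (so $|Y| \geq 2$). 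This should follow from the scale of $|Y|$ relative to $r$ that is implicit in the setup of Claim~\ref{claim:rank-tangle existence}, but would need to be spelled out carefully to be airtight, particularly to rule out a pathological run where peeling leaves $V(H')$ with too few vertices on the $Y$ side.
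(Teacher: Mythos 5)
Your proposal is correct, but it takes a more elaborate route than the paper's proof, which is noticeably more direct. The paper simply \emph{chooses} any subset $X^{**}\subseteq X^*$ with $|X^{**}|<|Y|/2$ and sets $H'\coloneqq H[X^{**}\cup Y]$; then $|Y|>\tfrac{2}{3}|V(H')|$ immediately, so no side of a balanced partition of $H'$ can contain all of $Y$, and the contradiction hypothesis supplies the required low-rank balanced cut in one step. You instead start from $X^{**}=X^*$ and iteratively peel off balanced low-rank cuts that lie entirely inside $X^{**}$, proving termination by observing that each failed peel removes at least $|Y|/3$ vertices from $X^{**}$. Both arguments hinge on the same underlying observation (once $|X^{**}|<|Y|/2$, balance forces every cut to split $Y$), and your cutrank bound via the submatrix argument is identical to the paper's. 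What your iteration buys is nothing the lemma needs — it is an existence statement, so the direct choice suffices — and it costs you an extra termination argument plus the loose ends you yourself flag (the possibility that peeling degenerates $V(H')$). Those degeneracy concerns (e.g., needing $|Y|\ge 2$ for both $A'\cap Y$ and $B'\cap Y$ to be nonempty, and needing $H'$ large enough to admit a balanced partition at all) are real but are equally unaddressed in the paper's own proof, so they are not a defect unique to your approach; still, if you keep the iterative version you would need to spell them out, whereas the paper's one-shot choice of $X^{**}$ sidesteps the termination bookkeeping entirely.
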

\begin{proof}
    Pick any subset $X^{**}\subseteq X^*$ with $|X^{**}| < |Y|/2$ and set $H' \coloneqq H[Y\cup X^{**}]$. Then, $\frac{|Y|}{|V(H')|} = \frac{|Y|}{|Y|+|X^{**}|}$ is greater than $\frac{2}{3}$ if and only if $|Y|>2|X^{**}|$. Since no side of a balanced separation of $H'$ can contain all of $Y$ (because that side would exceed $2|V(H')|/3$), every balanced partition $(A', B')$ of $H'$ necessarily satisfies $A'\cap Y \neq \emptyset$ and $B' \cap Y \neq \emptyset$. \\
    
    By applying the contradiction hypothesis to the induced subgraph $H'$, there exists a balanced separation $(A', B')$ of $H'$ with $\operatorname{cutrank}_{H'}(A', B') < r$, which then splits $Y$ by the aforementioned argument. Next, since $X^{**}\subseteq X^*$, taking the submatrix induced by $X^{**}$ can only reduce the rank of the $X^*\times Y$ biadjacency matrix; therefore, we have
    \begin{align*}
        \operatorname{cutrank}_{H'}(X^{**}, Y) &\leq \operatorname{cutrank}_H(X^*, Y) \\
        &= \operatorname{cutrank}_G(X,Y),
    \end{align*}
    and Claim \ref{claim:rank-tangle existence} gives $\operatorname{cutrank}_G(X, Y) < 70r$.\qedhere\\
    \end{proof}

\noindent To control the rank of cuts within the subgraph $H'$, we first need a general way to relate the rank of a matrix to the rank of its sub-blocks. The following general lemmas provide such inequalities.\\

\begin{lemma}[Frobenius rank inequality \cite{doi:10.1137/1.9781611977448}] \label{lemma: matrix block inequality}
    For any conforming triple of matrices $(\mathbf{U}, \mathbf{V}, \mathbf{W})$, we have \[\rank(\mathbf{U}\mathbf{V}) + \rank(\mathbf{V}\mathbf{W}) \leq \rank(\mathbf{V}) + \rank(\mathbf{U}\mathbf{V}\mathbf{W}).\]
\end{lemma}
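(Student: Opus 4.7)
The plan is to prove this classical identity by a short dimension-counting argument, interpreting $\mathbf{U}$, $\mathbf{V}$, $\mathbf{W}$ as linear maps between vector spaces of compatible dimensions and tracking the kernels of two successive restrictions of the map induced by $\mathbf{U}$.

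First, I would view left multiplication by $\mathbf{U}$ as a linear map and restrict it to the column space $\operatorname{col}(\mathbf{V})$. Writing $\widetilde{\mathbf{U}}\colon\operatorname{col}(\mathbf{V})\to\operatorname{col}(\mathbf{U}\mathbf{V})$ for this restriction, the routine identity $\operatorname{col}(\mathbf{U}\mathbf{V})=\mathbf{U}\cdot\operatorname{col}(\mathbf{V})$ makes $\widetilde{\mathbf{U}}$ surjective, so rank-nullity gives
\[
\rank(\mathbf{U}\mathbf{V}) \;=\; \rank(\mathbf{V}) \;-\; \dim\ker\widetilde{\mathbf{U}}.
\]
Restricting $\widetilde{\mathbf{U}}$ further to the subspace $\operatorname{col}(\mathbf{V}\mathbf{W})\subseteq\operatorname{col}(\mathbf{V})$, whose image under $\mathbf{U}$ is exactly $\operatorname{col}(\mathbf{U}\mathbf{V}\mathbf{W})$, a second application of rank-nullity yields
\[
\rank(\mathbf{U}\mathbf{V}\mathbf{W}) \;=\; \rank(\mathbf{V}\mathbf{W}) \;-\; \dim\bigl(\ker\widetilde{\mathbf{U}}\cap\operatorname{col}(\mathbf{V}\mathbf{W})\bigr).
\]

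The final step is the observation that the intersection of $\ker\widetilde{\mathbf{U}}$ with any subspace has dimension at most
\[
\dim\ker\widetilde{\mathbf{U}}\;=\;\rank(\mathbf{V})-\rank(\mathbf{U}\mathbf{V}),
\]
so substituting this bound into the second displayed equation and rearranging terms produces exactly $\rank(\mathbf{U}\mathbf{V})+\rank(\mathbf{V}\mathbf{W})\le\rank(\mathbf{V})+\rank(\mathbf{U}\mathbf{V}\mathbf{W})$, which is the Frobenius inequality.

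The main obstacle here is almost entirely bookkeeping: one just needs to verify $\mathbf{U}\cdot\operatorname{col}(\mathbf{M})=\operatorname{col}(\mathbf{U}\mathbf{M})$ (from which surjectivity of both restrictions follows) and check that the two rank-nullity applications are being performed on the correct domains. As a slicker alternative, one may instead work block-wise with $\begin{pmatrix}\mathbf{V} & \mathbf{V}\mathbf{W} \\ \mathbf{U}\mathbf{V} & \mathbf{U}\mathbf{V}\mathbf{W}\end{pmatrix}$, using the block row operation that subtracts $\mathbf{U}$ times the top block-row from the bottom to show the total rank equals $\rank(\mathbf{V})$, and then comparing against the four submatrix ranks; I prefer the dimension-counting route above because it avoids Schur-complement subtleties when $\mathbf{V}$ is not invertible and works uniformly over $\mathbb{F}_2$.
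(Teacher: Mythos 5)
Your dimension-counting argument is correct and complete. To verify: letting $\widetilde{\mathbf{U}}$ denote left-multiplication by $\mathbf{U}$ restricted to $\operatorname{col}(\mathbf{V})$, surjectivity onto $\operatorname{col}(\mathbf{U}\mathbf{V})$ follows from $\mathbf{U}\cdot\operatorname{col}(\mathbf{V})=\operatorname{col}(\mathbf{U}\mathbf{V})$, and rank--nullity gives $\dim\ker\widetilde{\mathbf{U}}=\rank(\mathbf{V})-\rank(\mathbf{U}\mathbf{V})$. Restricting further to $\operatorname{col}(\mathbf{V}\mathbf{W})\subseteq\operatorname{col}(\mathbf{V})$, whose image is $\operatorname{col}(\mathbf{U}\mathbf{V}\mathbf{W})$, a second application of rank--nullity yields $\rank(\mathbf{V}\mathbf{W})-\rank(\mathbf{U}\mathbf{V}\mathbf{W})=\dim\bigl(\ker\widetilde{\mathbf{U}}\cap\operatorname{col}(\mathbf{V}\mathbf{W})\bigr)\le\dim\ker\widetilde{\mathbf{U}}$, and rearranging gives the inequality. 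Every step goes through over an arbitrary field, in particular over $\mathbb{F}_2$ as the paper requires.

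Note, however, that the paper does not prove this lemma at all: it is stated as a citation to a standard reference, since the Frobenius rank inequality is a textbook fact. So there is no ``paper's proof'' to compare against; you have simply supplied a self-contained proof of a result the paper treats as known. Your route is the classical one, and it is a clean choice precisely because, as you observe, it sidesteps the Schur-complement subtleties of the block-matrix alternative when $\mathbf{V}$ is not invertible.
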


\begin{lemma}\label{lemma: matrix block rank inequality}
For a block matrix $\mathbf{M}$ over $\mathbb{F}_2$ with conforming matrices $(\mathbf{A}, \mathbf{B}, \mathbf{C}, \mathbf{D})$ given by \[\mathbf{M} \coloneqq
\begin{bmatrix}
\mathbf{A} & \mathbf{B}\\ \mathbf{C} & \mathbf{D}
\end{bmatrix},
\] we have \[\rank(\mathbf{M}) + \rank(\mathbf{B}) \geq \rank(\mathbf{A}) + \rank(\mathbf{D}).\]
\end{lemma}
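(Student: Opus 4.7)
The plan is to apply the Frobenius rank inequality (Lemma~\ref{lemma: matrix block inequality}) to a carefully chosen conforming triple $(\mathbf{U}, \mathbf{V}, \mathbf{W})$ in which $\mathbf{V} = \mathbf{M}$ and $\mathbf{U}\mathbf{V}\mathbf{W}$ collapses to exactly the off-diagonal block $\mathbf{B}$, while the partial products $\mathbf{U}\mathbf{V}$ and $\mathbf{V}\mathbf{W}$ retain top-row and right-column strips of $\mathbf{M}$ whose ranks dominate $\rank(\mathbf{A})$ and $\rank(\mathbf{D})$ respectively. This is essentially the only way to set things up so that a single invocation of Frobenius connects all four of $\rank(\mathbf{A}), \rank(\mathbf{B}), \rank(\mathbf{D}), \rank(\mathbf{M})$.

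Concretely, I will take $\mathbf{U} = [\,\mathbf{I}\;\;\mathbf{0}\,]$, the projection onto the top block of rows with identity block matching the row-count of $\mathbf{A}$, and $\mathbf{W} = \begin{bmatrix}\mathbf{0}\\\mathbf{I}\end{bmatrix}$, the projection onto the right block of columns with identity block matching the column-count of $\mathbf{B}$. A direct block multiplication then gives
\begin{align*}
\mathbf{U}\mathbf{M} = [\,\mathbf{A}\;\;\mathbf{B}\,], \qquad \mathbf{M}\mathbf{W} = \begin{bmatrix}\mathbf{B}\\\mathbf{D}\end{bmatrix}, \qquad \mathbf{U}\mathbf{M}\mathbf{W} = \mathbf{B}.
\end{align*}
Feeding these into Lemma~\ref{lemma: matrix block inequality} immediately yields
\begin{align*}
\rank([\,\mathbf{A}\;\;\mathbf{B}\,]) + \rank\begin{bmatrix}\mathbf{B}\\\mathbf{D}\end{bmatrix} \;\le\; \rank(\mathbf{M}) + \rank(\mathbf{B}).
\end{align*}

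To close the argument I will invoke elementary monotonicity of rank under submatrix restriction: since the columns of $\mathbf{A}$ sit inside $[\,\mathbf{A}\;\;\mathbf{B}\,]$ as a genuine set of columns, any maximal linearly independent subset of columns of $\mathbf{A}$ remains independent in the larger matrix, so $\rank([\,\mathbf{A}\;\;\mathbf{B}\,]) \ge \rank(\mathbf{A})$. The symmetric argument on rows gives $\rank\begin{bmatrix}\mathbf{B}\\\mathbf{D}\end{bmatrix} \ge \rank(\mathbf{D})$. Substituting both lower bounds into the inequality above produces $\rank(\mathbf{A}) + \rank(\mathbf{D}) \le \rank(\mathbf{M}) + \rank(\mathbf{B})$, which is exactly the claim.

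I do not anticipate any substantive obstacle here; the only items to check are that the block dimensions of $\mathbf{U}, \mathbf{M}, \mathbf{W}$ conform so the three products are defined, and that the Frobenius inequality applies over $\mathbb{F}_2$ (it does, since it holds over any field, as the proof only uses dimension counts in vector spaces). The sole nontrivial design choice in the proof is the triple $(\mathbf{U}, \mathbf{V}, \mathbf{W})$, and even this is essentially forced by the dual requirements that $\mathbf{U}\mathbf{V}\mathbf{W} = \mathbf{B}$ while each of $\mathbf{U}\mathbf{V}$ and $\mathbf{V}\mathbf{W}$ contains one of the two diagonal blocks $\mathbf{A}, \mathbf{D}$.
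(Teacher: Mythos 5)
Your proposal is correct and is essentially identical to the paper's proof: both choose $\mathbf{U}=[\,\mathbf{I}\;\;\mathbf{0}\,]$, $\mathbf{V}=\mathbf{M}$, $\mathbf{W}=\begin{bmatrix}\mathbf{0}\\\mathbf{I}\end{bmatrix}$, apply Frobenius to get $\rank([\,\mathbf{A}\;\;\mathbf{B}\,])+\rank\begin{bmatrix}\mathbf{B}\\\mathbf{D}\end{bmatrix}\le\rank(\mathbf{M})+\rank(\mathbf{B})$, and finish with submatrix monotonicity of rank. Nothing further to add.
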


\begin{proof}
Take
\[
\mathbf{U}=\begin{bmatrix}\mathbf{I} & \mathbf{0}\end{bmatrix}, 
\qquad
\mathbf{V}=\mathbf{M}=\begin{pmatrix}\mathbf{A}&\mathbf{B}\\ \mathbf{C}&\mathbf{D}\end{pmatrix},
\qquad
\mathbf{W}=\begin{bmatrix}\mathbf{0}\\ \mathbf{I}\end{bmatrix},
\]
where the identity blocks have the sizes of $\mathbf{A}$ and $\mathbf{D}$ respectively.
Then
\[
\mathbf{UV}=\begin{bmatrix}\mathbf{A} & \mathbf{B}\end{bmatrix}, 
\qquad 
\mathbf{VW}=\begin{bmatrix}\mathbf{B}\\ \mathbf{D}\end{bmatrix},
\qquad
\mathbf{UVW} = \mathbf{B}.
\]
Plugging these into Lemma \ref{lemma: matrix block inequality} with Frobenius' inequality gives
\begin{align*}\rank(\mathbf{M}) + \rank(\mathbf{B}) &= \rank(\mathbf{V}) + \rank(\mathbf{UVW}) \\ &\geq \rank(\mathbf{UV}) + \rank(\mathbf{VW}) \\
&= \rank\left(\begin{bmatrix}\mathbf{A} & \mathbf{B}\end{bmatrix}\right) + 
\rank\left(\begin{bmatrix}\mathbf{B} \\ \mathbf{D}\end{bmatrix}\right) \\
&\geq \rank(\mathbf{A}) + \rank(\mathbf{D}),
\end{align*}
where the last inequality follows by noting that the rank is non-decreasing when taking a submatrix (by removing columns/rows), and therefore 
\[\rank(\begin{bmatrix}\mathbf{A} & \mathbf{B}\end{bmatrix}) \geq \rank(\mathbf{A})\] and \[\rank\left(\begin{bmatrix}\mathbf{B} \\ \mathbf{D}\end{bmatrix}\right) \geq \rank(\mathbf{D}).\]
Together, these yield the desired inequality.\qedhere\\
\end{proof}

 With these algebraic tools, we now apply them to the specific cut $(A', B')$ within the subgraph $H'$. The following lemma uses the block-matrix inequality from Lemma \ref{lemma: matrix block rank inequality} to derive a crucial upper bound on the ranks of the sub-partitions within $H'$.\\

\begin{lemma} We have that \[\operatorname{cutrank}(X,Y)\geq \operatorname{cutrank}(A\cap X,A\cap Y) + \operatorname{cutrank}(B\cap X,  B\cap Y) - \operatorname{cutrank}(A, B).\]\label{helper lemma}
\end{lemma}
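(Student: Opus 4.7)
The plan is to view $\operatorname{Adj}_G(X,Y)$ as a $2\times 2$ block matrix induced by refining the bipartition $(X,Y)$ by $(A,B)$, and then apply Lemma~\ref{lemma: matrix block rank inequality} so that the off-diagonal block, which is itself contained in $\operatorname{Adj}_G(A,B)$, absorbs all the slack. Concretely, write $\alpha = A\cap X$, $\beta = B\cap X$, $\gamma = A\cap Y$, $\delta = B\cap Y$; these four cells partition $V(G)$. Ordering the rows of $\operatorname{Adj}_G(X,Y)$ as $\alpha$ followed by $\beta$ and the columns as $\gamma$ followed by $\delta$ yields
\begin{align*}
\operatorname{Adj}_G(X,Y)\;=\;
\begin{pmatrix}
\operatorname{Adj}_G(\alpha,\gamma) & \operatorname{Adj}_G(\alpha,\delta)\\
\operatorname{Adj}_G(\beta,\gamma) & \operatorname{Adj}_G(\beta,\delta)
\end{pmatrix}.
\end{align*}

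Next, I would invoke Lemma~\ref{lemma: matrix block rank inequality} with $\mathbf{A}=\operatorname{Adj}_G(\alpha,\gamma)$, $\mathbf{B}=\operatorname{Adj}_G(\alpha,\delta)$, $\mathbf{C}=\operatorname{Adj}_G(\beta,\gamma)$, and $\mathbf{D}=\operatorname{Adj}_G(\beta,\delta)$, obtaining
\begin{align*}
\operatorname{cutrank}(X,Y) + \operatorname{rank}\bigl(\operatorname{Adj}_G(\alpha,\delta)\bigr)\;\ge\;\operatorname{cutrank}(\alpha,\gamma) + \operatorname{cutrank}(\beta,\delta).
\end{align*}
The crucial observation is that $\operatorname{Adj}_G(\alpha,\delta)=\operatorname{Adj}_G(A\cap X,\,B\cap Y)$ is a submatrix of $\operatorname{Adj}_G(A,B)$, since the rows of the latter are indexed by $A=\alpha\sqcup\gamma$ and its columns by $B=\beta\sqcup\delta$. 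Because passing to a submatrix cannot increase rank, $\operatorname{rank}(\operatorname{Adj}_G(\alpha,\delta))\le\operatorname{cutrank}(A,B)$. Substituting this into the previous display and rearranging gives exactly the claimed inequality.

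The only real obstacle is cosmetic: correctly aligning the two different block structures that $(X,Y)$ and $(A,B)$ jointly induce on $V(G)$, and recognising that the top-right block of the first decomposition is also a block of the second. By symmetry, one could equally well take $\operatorname{Adj}_G(\beta,\gamma)=\operatorname{Adj}_G(B\cap X,\,A\cap Y)$ as the distinguished off-diagonal block, since it is likewise a submatrix of $\operatorname{Adj}_G(A,B)$ (up to transposition), so the argument is robust to this choice. Note that beyond $(A,B)$ being a partition of $V(G)$, no structural assumption on the separations is required, so the inequality holds in full generality and can be applied to the specific pair $(A',B')$ furnished by Lemma~\ref{lemma: splitting Y reduction}.
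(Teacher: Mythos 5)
Your proof is correct and follows essentially the same route as the paper: both view $\operatorname{Adj}_G(X,Y)$ as a $2\times 2$ block matrix with blocks indexed by the refinement $(A\cap X,\,B\cap X)\times(A\cap Y,\,B\cap Y)$, apply Lemma~\ref{lemma: matrix block rank inequality}, and observe that the distinguished off-diagonal block $\operatorname{Adj}_G(A\cap X,\,B\cap Y)$ is a submatrix of $\operatorname{Adj}_G(A,B)$ so its rank is dominated by $\operatorname{cutrank}(A,B)$.
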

\begin{proof}

Let $X_1\coloneqq A\cap X, X_2\coloneqq B\cap X, Y_1\coloneqq A\cap Y$ and $Y_2\coloneqq B\cap Y$. \\

Next, construct the biadjacency matrix $\mathbf{T} \in \mathbb{F}_2^{(|X_1|+|X_2|)\times (|Y_1| + |Y_2|)}$ given by \[\mathbf{T} = \begin{bmatrix}
    \mathbf{A} & \mathbf{B} \\ \mathbf{C} &  \mathbf{D}
\end{bmatrix},\]
such that the rows of $\mathbf{T}$ are $X_1$ concatenated with $X_2$ and the columns of $\mathbf{T}$ are $Y_1$ concatenated with $Y_2$; here, $\mathbf{A}$ denotes the submatrix $\mathbf{T}[X_1, Y_1]$, $\mathbf{B}$ denotes the submatrix $\mathbf{T}[X_1, Y_2], \mathbf{C}$ denotes the submatrix $\mathbf{T}[X_2, Y_1]$, and $\mathbf{D}$ denotes the submatrix $\mathbf{T}[X_2, Y_2]$. \\

Over $\mathbb{F}_2$, Lemma \ref{lemma: matrix block rank inequality} gives us the rank formula \[\rank(\mathbf{T}) \geq \rank(\mathbf{A}) + \rank(\mathbf{D}) - \rank(\mathbf{B}).\] Translating this to cutrank, by using the fact that the cutrank is defined as the rank of the corresponding biadjacency matrix over $\mathbb{F}_2$, and noting that $\operatorname{cutrank}(A,B) \geq \rank(\mathbf{B})$ (adding to the rows or columns can never decrease the rank), we get \[\operatorname{cutrank}(X,Y)\geq \operatorname{cutrank}(X_1, Y_1) + \operatorname{cutrank}(X_2, 
Y_2) - \operatorname{cutrank}(A,B),\] which proves the lemma.\qedhere\\
\end{proof}

\begin{lemma}\label{lemma: second_condition}
If $(A',B')$ is a balanced partition of $H'$ (from Lemma \ref{lemma: splitting Y reduction}) such that $\operatorname{cutrank}_{H'}(A',B') < r$, then \[\operatorname{cutrank}_{H'}(A' \cap X^{**}, A' \cap Y) + \operatorname{cutrank}_{H'}(B'\cap X^{**}, B' \cap Y) \leq 71r - 2.\]
\end{lemma}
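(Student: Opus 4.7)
The plan is to apply Lemma \ref{helper lemma} directly inside the induced subgraph $H'$, taking the partition of $V(H')$ into $(X^{**}, Y)$ as the ``$(X,Y)$'' of that lemma and the given balanced partition $(A', B')$ as the ``$(A,B)$''. Since $V(H') = X^{**} \sqcup Y$ and $(A', B')$ is a partition of $V(H')$, the four intersections $A'\cap X^{**}$, $A'\cap Y$, $B'\cap X^{**}$, $B'\cap Y$ partition $V(H')$ exactly as required to instantiate the block-matrix identity. Lemma \ref{helper lemma} then yields
\begin{align*}
\operatorname{cutrank}_{H'}(X^{**}, Y) \;\geq\; \operatorname{cutrank}_{H'}(A' \cap X^{**}, A' \cap Y) + \operatorname{cutrank}_{H'}(B'\cap X^{**}, B'\cap Y) - \operatorname{cutrank}_{H'}(A', B').
\end{align*}

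Rearranging, the sum of interest is bounded above by $\operatorname{cutrank}_{H'}(X^{**},Y) + \operatorname{cutrank}_{H'}(A',B')$. I would then substitute the two numerical bounds available: from Lemma \ref{lemma: splitting Y reduction} we have $\operatorname{cutrank}_{H'}(X^{**},Y) \le \operatorname{cutrank}_G(X,Y) < 70r$, and by hypothesis $\operatorname{cutrank}_{H'}(A',B') < r$. Since every cutrank is a non-negative integer, the two strict inequalities tighten to $\operatorname{cutrank}_{H'}(X^{**},Y) \le 70r - 1$ and $\operatorname{cutrank}_{H'}(A',B') \le r - 1$, summing to exactly $71r - 2$, which is the claimed bound.

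There is no real obstacle here: the entire content is a bookkeeping application of the Frobenius-style block-rank inequality (Lemma \ref{lemma: matrix block rank inequality}) packaged as Lemma \ref{helper lemma}, together with the previously established bounds on the two ``cross'' cutranks. The only subtlety I would double-check is that Lemma \ref{helper lemma} is being applied in the correct ambient graph, namely $H'$ rather than $G$; this matters because the biadjacency matrix is the one induced by $H'$, and it is precisely this matrix whose rank appears in $\operatorname{cutrank}_{H'}(X^{**},Y)$. The integrality step that turns $<$ into $\le$ with the $-1$ offset is what delivers the sharp $71r-2$ rather than the looser $71r$, and I would state it explicitly to justify the exact constant.
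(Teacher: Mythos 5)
Your proof is correct and takes essentially the same approach as the paper: both reduce to the block-rank inequality applied to the $X^{**}\times Y$ biadjacency matrix in $H'$ partitioned by $(A',B')$, followed by the integrality tightening of the two strict bounds to $70r-1$ and $r-1$. The only cosmetic difference is that you invoke Lemma~\ref{helper lemma} as a cited intermediary, whereas the paper re-derives that consequence inline by applying Lemma~\ref{lemma: matrix block rank inequality} directly to the block matrix $\mathbf{M}$.
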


    \begin{proof}Let $X^{**}$ denote the set of representative vertices from $X$ that are present in the induced subgraph $H'$, as established in Lemma \ref{lemma: splitting Y reduction}. Specifically, $X^{**} = V(H') \cap X^*$, where $X^* = V(H) \cap X$. Note that by Lemma \ref{lemma: splitting Y reduction}, $\operatorname{cutrank}_{H'}(X^{**}, Y)\leq \operatorname{cutrank}_G(X, Y) < 70r$.  From Claim \ref{claim:rank-tangle existence}, we know that $\mathrm{cutrank}_G(X,Y) \leq 70r - 1$ since ranks are integral measures and $\operatorname{cutrank}_G(X,Y)<70r$. \\

Next, consider the biadjacency matrix $\mathbf{M}$ over $\mathbb{F}_2$ representing the cut between $X^{**}$ and $Y$ in the subgraph $H'$, block-partitioned by the balanced partition $(A',B')$ of $V(H')$,
\[
\mathbf{M} = \begin{bmatrix}
\mathbf{M}_{A'A'} & \mathbf{M}_{A'B'} \\
\mathbf{M}_{B'A'} & \mathbf{M}_{B'B'}
\end{bmatrix},
\]
where the rows of $\mathbf{M}$ are $(A'\cap X^{**}, B'\cap X^{**})$ and the columns are $(A'\cap Y, B'\cap Y)$. Specifically, $\mathbf{M}_{A'A'} = \mathrm{Adj}_{H'}(A' \cap X^{**}, A' \cap Y)$, $\mathbf{M}_{A'B'} = \mathrm{Adj}_{H'}(A' \cap X^{**}, B' \cap Y)$,  $\mathbf{M}_{B'A'} = \mathrm{Adj}_{H'}(B' \cap X^{**}, A' \cap Y)$, and $\mathbf{M}_{B'B'} = \mathrm{Adj}_{H'}(B' \cap X^{**}, B' \cap Y)$. The ranks of these matrices correspond to the cutrank of the respective vertex partitions. \\

We are given that $\mathrm{cutrank}_{H'}(A',B')\leq r-1$ (due to the integrality of the rank), and the matrices representing this cut are $\mathrm{Adj}_{H'}(A', B'\cap V(H'))$ and $\mathrm{Adj}_H(B', A'\cap V(H'))$. \\

Moreover, by definition
\[\rank(\mathbf{M}) = \operatorname{cutrank}_{H'}(X,Y) \leq 70r - 1\] and \[\rank(\mathbf{M}_{A'B'}) \leq \operatorname{cutrank}_{H'}(A',B') \leq r - 1.\]
Applying Lemma \ref{lemma: matrix block rank inequality} to $\mathbf{M}$ yields
\[\rank(\mathbf{M}) + \rank(\mathbf{M}_{A'B'}) \geq \rank(\mathbf{M}_{A'A'}) + \rank(\mathbf{M}_{B'B'}).\]
Combining these gives that
\begin{align*}\operatorname{cutrank}_{H'}(A'\cap X^{**}, A'\cap Y) &+ \operatorname{cutrank}_{H'}(B'\cap X^{**}, B'\cap Y) \\
&= \rank(\mathbf{M}_{A'A'}) + \rank(\mathbf{M}_{B'B'}) \\
&\leq 70r - 1 + r - 1 \\
&= 71r - 2
\end{align*}
which proves the claim of the lemma. 
\qedhere\\
\end{proof}

 Bore lifting, we isolate an important size relation between $X$ and $Y$. This prevents a degenerate case where the small side has a cardinality that is too small to prevent a contradiction from the tangle maximality.

\begin{lemma}
    $|X|\geq 18|Y|$.\label{lemma: x>18y}
\end{lemma}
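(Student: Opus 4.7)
My plan is to prove $|X| \geq 18|Y|$ by contradiction, applying the $(3|Y|, 1/7)$-well-behavedness of $(X, Y)$ from Claim~\ref{claim:rank-tangle existence} to a maximally unbalanced partition of $X$ whose smaller part is a single vertex. The intuition is that well-behavedness forces $\operatorname{cutrank}(A \cup Y, B) \geq 60r$ for any admissible partition $(A, B)$ of $X$, whereas taking $|B| = 1$ trivially caps this cutrank at $1$, producing an immediate contradiction whenever the size hypothesis of well-behavedness is met.

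Concretely, I would assume for contradiction that $|X| \leq 18|Y| + 1$. Pick any vertex $v \in X$ and set $A := X \setminus \{v\}$ and $B := \{v\}$. Both hypotheses of the well-behaved condition with $s = 3|Y|$, $\epsilon = 1/7$ then hold: the size bound $|A| = |X| - 1 \leq 18|Y| = 6s$ is exactly the contradiction hypothesis, and the rank bound $\operatorname{cutrank}(A, Y) \leq \operatorname{cutrank}(X, Y) < 70r$ follows from the monotonicity of $\mathbb{F}_2$-rank under row-deletion, since $A \subseteq X$. The well-behaved condition therefore yields
\[
\operatorname{cutrank}(A \cup Y, B) \;\geq\; 6 \cdot \tfrac{1}{7} \cdot 70r \;=\; 60r.
\]
On the other hand, since $B = \{v\}$, the biadjacency matrix $\operatorname{Adj}_G(A \cup Y, \{v\})$ is a single column over $\mathbb{F}_2$ and has rank at most $1$. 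Combining these bounds yields $1 \geq 60r$, contradicting $r \geq 1$; hence $|X| > 18|Y| + 1$, which implies the stated bound.

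I do not foresee a significant obstacle beyond verifying that the singleton-$B$ partition is an admissible input to the well-behaved condition, which needs $|X| \geq 2$. This holds in any non-degenerate setting: if $|X| \leq 1$ then $\operatorname{cutrank}(X, Y) \leq 1 \ll 70r$, and moving any vertex from $Y$ into $X$ changes the cutrank by at most $1$, producing a small side of cutrank still well below $70r$ but of size strictly larger than $|X|$, which would contradict the maximality of $|X|$ established in Claim~\ref{claim:rank-tangle existence}. So the only case in which the singleton-partition trick fails is vacuous, and the argument above covers all remaining configurations.
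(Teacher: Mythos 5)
Your proposal is correct and follows essentially the same route as the paper: apply the $(3|Y|,1/7)$-well-behavedness of $(X,Y)$ to a degenerate partition of $X$ whose $B$-part has trivially small cutrank against $A\cup Y$, and observe that the forced lower bound of $60r$ cannot be met. The paper instantiates the definition with $(A,B)=(X,\emptyset)$, which makes $\operatorname{cutrank}(A\cup Y, B)=\operatorname{cutrank}(V(G),\emptyset)=0$ and sidesteps any question of whether $X$ is non-empty; you instead take $B=\{v\}$ for some $v\in X$, which caps the cutrank at $1$ and works just as well, at the small cost of having to assume $|X|\ge 1$ (not $\ge 2$ as you write: with $|X|=1$, $A=\emptyset$, $B=\{v\}$ is still a valid partition and the argument goes through). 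Your closing paragraph, which argues that the degenerate case $|X|\le 1$ cannot occur, is hand-wavy (it asserts without justification that the tangle $\tau$ orients the enlarged separation towards the side containing $X$), but it is also unnecessary: replacing $B=\{v\}$ by $B=\emptyset$, as the paper does, eliminates the issue entirely.
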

\begin{proof}
    We apply the definition of $(s, \epsilon)$-well-behavedness (in Definition \ref{def: well-behaved separations}) to the trivial partition $(A, \emptyset)$ of $X$, with $s=3|Y|$ and $\epsilon=1/7$. For this, note that $|A|=|X|$ and
\[\operatorname{cutrank}(A,Y) = \operatorname{cutrank}(X,Y) \leq 70r\]
If $|X| \leq 6s = 18|Y|$, then the well-behaved property would require $\operatorname{cutrank}(A \cup Y, B) = \operatorname{cutrank}(X \cup Y, \emptyset) = 0 \geq 6\epsilon \cdot 70r = 60r$,
which is clearly false since $r \geq 1$. \\

Therefore, we must have $|X| > 6s = 18|Y|$, which proves the lemma.
    \qedhere\\
\end{proof}

\subsection{Contradiction via a Low-Rank Cut}
We are now ready to lift the balanced low‑rank cut $(A', B')$ of $H'$ to two separations of $G$ by adjoining each piece of $Y$ back to $X$. Subadditivity bounds ensure that each lifted separation has cutrank strictly less than $72r$.\\

\begin{lemma}[Lifting balanced cuts] \label{lemma:cutrank<r implies cutrank<70r}
    For the induced subgraph $H' \subseteq G$ from Claim \ref{claim:findingH}, and its balanced partition $(A',B')$ (from Lemma \ref{lemma: splitting Y reduction}) where $\operatorname{cutrank}_{H'}(A',B')<r$, we have that 
    \[\operatorname{cutrank}_G(X\cup (A' \cap  Y), B' \cap  Y) < 72r\] and \[\operatorname{cutrank}_G(X\cup (B' \cap Y), A' \cap Y) < 72r.\]
\end{lemma}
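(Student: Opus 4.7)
The plan is to deduce both inequalities by direct subadditivity of cutrank, combined with the bounds already established in Claim~\ref{claim:rank-tangle existence} and Lemma~\ref{lemma: splitting Y reduction}. No new algebra is required beyond the observation that the biadjacency matrix on one side of the lifted partition naturally splits into two row-blocks whose ranks we already know.

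First, I would note that $V(G) = X \sqcup Y = X \sqcup (A'\cap Y) \sqcup (B'\cap Y)$, so the two sets in the statement do form a partition of $V(G)$. For the subadditivity step, I would invoke the basic fact that for pairwise-disjoint $P,Q,R \subseteq V(G)$, the biadjacency matrix of $(P\cup Q,R)$ is the vertical concatenation of the matrices of $(P,R)$ and $(Q,R)$, hence
\[
\operatorname{cutrank}_G(P\cup Q, R) \;\le\; \operatorname{cutrank}_G(P,R) + \operatorname{cutrank}_G(Q,R).
\]
Applying this with $P=X$, $Q=A'\cap Y$, $R=B'\cap Y$ gives
\[
\operatorname{cutrank}_G\bigl(X\cup(A'\cap Y),\, B'\cap Y\bigr) \;\le\; \operatorname{cutrank}_G(X,\, B'\cap Y) + \operatorname{cutrank}_G(A'\cap Y,\, B'\cap Y).
\]

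Next I would bound the two summands individually. For the first, since $B'\cap Y \subseteq Y$, the matrix $\operatorname{Adj}_G(X, B'\cap Y)$ is obtained from $\operatorname{Adj}_G(X,Y)$ by deleting columns, so its rank is at most $\operatorname{cutrank}_G(X,Y) < 70r$ by Claim~\ref{claim:rank-tangle existence}. For the second, since $H'$ is an induced subgraph of $G$ and both $A'\cap Y$ and $B'\cap Y$ lie in $V(H')$, the biadjacency matrices agree: $\operatorname{cutrank}_G(A'\cap Y, B'\cap Y) = \operatorname{cutrank}_{H'}(A'\cap Y, B'\cap Y)$. This is then a submatrix of $\operatorname{Adj}_{H'}(A', B')$, so its rank is at most $\operatorname{cutrank}_{H'}(A', B') < r$ by Lemma~\ref{lemma: splitting Y reduction}. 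Summing gives $< 70r + r = 71r < 72r$, as required.

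The second inequality follows by a completely symmetric argument (swap the roles of $A'$ and $B'$ in the decomposition above). The main ``obstacle'' here is really just bookkeeping: making sure that (i) the three-way decomposition of $V(G)$ is set up so that subadditivity applies cleanly, and (ii) the passage between cutranks in $G$ and cutranks in the induced subgraph $H'$ is justified for every pair of vertex sets involved. Both are immediate from the fact that $H'$ is induced and $Y \subseteq V(H')$, so I do not anticipate any genuine technical difficulty.
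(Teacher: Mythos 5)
Your proof is correct and follows essentially the same route as the paper's: you decompose the lifted cut via row-block subadditivity into $\operatorname{cutrank}_G(X,B'\cap Y)+\operatorname{cutrank}_G(A'\cap Y,B'\cap Y)$, bound the first term by $\operatorname{cutrank}_G(X,Y)<70r$ (column deletion) and the second by $\operatorname{cutrank}_{H'}(A',B')<r$ (induced-subgraph submatrix), and symmetrize. The only cosmetic difference is that the paper spells out the integrality step ($\le 70r-1$ and $\le r-1$) to land on $71r-2<72r$, whereas you keep the strict inequalities and conclude $<71r<72r$; both are valid.
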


\begin{proof}
Let $(A',B')$ be the balanced partition of $H'$ from Lemma \ref{lemma: splitting Y reduction}. Let $X^{**}=X\cap V(H')$ and $\mathbf{M}=\mathrm{Adj}_{H'}(X^{**},Y)$, block-partitioned by $(A',B')$ as
\[\mathbf{M}=\begin{bmatrix}\mathbf{M}_{A'A'} & \mathbf{M}_{A'B'} \\ \mathbf{M}_{B'A'} & \mathbf{M}_{B'B'}\end{bmatrix}.
\]
By Lemma \ref{lemma: second_condition} and Claim \ref{claim:findingH}, we have that
\[\operatorname{rank}(\mathbf{M})=\operatorname{cutrank}_{H'}(X^{**},Y)\le 70r-1.
\]
Moreover, we have the symmetric relations 
\[\operatorname{rank}(\mathbf{M}_{A'B'})\le \operatorname{cutrank}_{H'}(A',B')\le r-1\] 
and
\[\operatorname{rank}(\mathbf{M}_{B'A'})\le \operatorname{cutrank}_{H'}(A',B')\le r-1.\] 

\noindent Since $B'\subseteq X^{**} \cup Y$ and $X^{**} \subseteq X$, we have $X \cup B' = X \cup (B' \cap Y)$. Similarly, $X \cup A' = X \cup (A' \cap Y)$; therefore,  we have that the two lifted partitions of $V(G)=X\cup Y$ are \[P_1\coloneqq (X\cup (A'\cap Y), B'\cap Y)\] and \[P_2\coloneqq (X\cup (B'\cap Y), A'\cap Y).\] 

We first bound the cutrank of $P_1$. By subadditivity of the matrix rank, we write
\begin{align*}
    \operatorname{cutrank}_G(X\cup (A'\cap Y), B'\cap Y) &\leq \operatorname{cutrank}_G(X, B'\cap Y) + \operatorname{cutrank}_G(A'\cap Y, B'\cap Y) \\
    &\leq \operatorname{cutrank}_G(X, Y) + \operatorname{cutrank}_G(A'\cap Y, B'\cap Y) \\
    &\leq \operatorname{cutrank}_G(X,Y) + \operatorname{cutrank}_{H'}(A',B') \\
    &\leq (70r - 1) + (r-1) \\
    &< 72r,
\end{align*}

where the second inequality follows from noting that $\mathrm{Adj}_G(X, B'\cap Y)$ is a column submatrix of $\mathrm{Adj}_G(X,Y)$, and therefore $\operatorname{cutrank}_G(X,B'\cap Y) \leq \operatorname{cutrank}_G(X, Y)$. The third inequality follows from noting that $\mathrm{Adj}_G(A'\cap Y, B'\cap Y)$ is a submatrix of $\mathrm{Adj}_{H'}(A', B')$, whose cutrank is bounded (by supposition) by $r$, and the fourth inequality follows by Claim \ref{claim:rank-tangle existence} which gives $\operatorname{cutrank}_G(X, Y) < 70r$.\\

We now bound the cutrank of $P_2 \coloneqq (X\cup (B\cap Y), A\cap Y)$. For this, a symmetric argument gives that 
\begin{align*}
    \operatorname{cutrank}_G(X\cup (B'\cap Y), A'\cap Y) &\leq \operatorname{cutrank}_G(X, A'\cap Y) + \operatorname{cutrank}_G(B'\cap Y, A'\cap Y) \\
    &\leq 70r - 1 + (r-1) \\
    &< 72r, 
\end{align*}
which proves the claim.\qedhere\\
\end{proof}

Now we are ready to complete the proof of the theorem. By way of contradiction, suppose that every induced subgraph $H' \subseteq G$ has $\operatorname{min-bal-cutrank}(H')$ strictly less than $r$, i.e., for every balanced partition $(A', B')$ of $V(H')$ we have $\operatorname{cutrank}_{H'}(A', B') < r$. Let $\tau$ be a rank-tangle of order $72r$. By Claim \ref{claim:rank-tangle existence}, fix $(X,Y)$ with $\mathrm{cutrank}(X,Y)<70r$ where $X\in\tau$, that is $(3|Y|,1/7)$-well-behaved. By Lemma \ref{lemma: splitting Y reduction}, there exists an induced subgraph $H'\subseteq G$ and a balanced partition $(A', B')$ of $H'$ such that $\mathrm{cutrank}_{H'}(A',B')<r$, $A'\cap Y \neq \emptyset$ and $B'\cap Y\neq \emptyset$, and if $X^{**} = V(H')\cap X^*$, then $\operatorname{cutrank}_{H'}(X^{**}, Y) < 70r$. \\

Now, consider the two lifted partitions of $V(G)$ based on this cut: $P_1 \coloneqq (X\cup (A'\cap Y), B'\cap Y)$ and $P_2\coloneqq (X\cup (B'\cap Y), A'\cap Y)$. 
By Lemma \ref{lemma:cutrank<r implies cutrank<70r}, which applies because $\operatorname{cutrank}_{H'}(A', B') < r$ and $\operatorname{cutrank}_{H'}(X^{**}, Y) < 70r$ (which implies $\operatorname{cutrank}_G(X,Y) < 70r$), both of these lifted separations $P_1$ and $P_2$ have cutrank strictly less than $72r$. \\

Therefore, the tangle $\tau$ must orient both: by the second tangle axiom in Definition \ref{def: rank-tangle of order k}, the three sets $X, B'\cap Y$, and $A'\cap Y$ cannot all be in $\tau$ (as their union is $V(G)$). Since $X\in \tau$, at least one of $(B'\cap Y)$ or $(A'\cap Y)$ is not in $\tau$. Now, if $B'\cap Y \notin \tau$, then its complement in $P_1$, which is $X\cup (A'\cap Y)$, must be in $\tau$. Similarly, if $A'\cap Y\notin \tau$, then $X\cup (B'\cap Y)$ must be in $\tau$. In either case, $\tau$ must choose a set that is strictly larger than $X$ (as Lemma \ref{lemma: splitting Y reduction} guarantees that $A'\cap Y \neq \emptyset$ and $B'\cap Y \neq \emptyset$); however, this then contradicts the maximality of $X$ from Claim \ref{claim:rank-tangle existence}. Therefore, our initial assumption must be false, and there exists an induced subgraph of $G$ with min-bal-cutrank at least $r$, which proves Theorem \ref{thm:main}.\qedhere
\end{proof}

\section{Conclusion}

We show that any graph with sufficiently large rankwidth must contain an induced subgraph where every balanced bisection has a high cutrank. In its dual, this says that a graph has small rankwidth if every large induced subgraph can be bisected by a low-rank cut, and that it has large rankwidth if it contains a large induced subgraph that is robustly connected against all balanced low-rank cuts. This establishes an equivalence between rankwidth and the maximum minimum-balanced-cutrank over all induced subgraphs. 

\section{Future Work} We identify several key areas for future work in this field. \\

First, it would be interesting to identify an extension of dense-graph expansion to \emph{edge-weighted graphs}. For this, we envision interesting roadblocks in the bit complexity of the adjacency graphs \cite{anand2024bitcomplexitydynamicalgebraic} and their geometry \cite{gotlib2023highperspectivehighdimensional} to be overcome. 
Secondly, one reason why expansion is a mathematically interesting notion is because its linear-algebraic, topological, and combinatorial notions are all equivalent \cite{Hoory2006}. It would be interesting to identify similar equivalencies with rank-expansion, and to derive connections to information-theory; for instance, our framework is built on the rank of matrices over $\mathbb{F}_2$, but a generalization would be to see if analogous results hold for matrix ranks over $\mathbb{R}$ or other finite fields. 
Thirdly, the constant $70$ in our main result in Theorem \ref{thm:main} arises from the constants in the ``well-behaved'' definition and the number of iterations in the argument from \cite{Dvorak_Norin_2018}. We believe this is not tight, and determining the optimal constant relating rankwidth and the maximum min-bal-cutrank is a natural follow-up problem. 
Finally, certain algorithms admit a structural parameterization through the VC-dimension \cite{pmlr-v49-alon16, anand2025structuralcomplexitymatrixvectormultiplication,chan2025trulysubquadratictimealgorithms,duraj2024betterdiameteralgorithmsbounded, karczmarz2024subquadraticalgorithmsminorfreedigraphs} and the related Pollard pseudodimension. Similarly, it would be interesting if our structural characterization could lead to new approximation algorithms or FPT algorithms for problems parameterized by rankwidth.\\

\section{Acknowledgements}
This work was supported by NSF Grants CCF 2338816 and DMS 2202961. EA thanks Professor Rose McCarty for introducing me to this problem and the work in this field, for her deep insights and early discussions, and for hosting him at Princeton University's mathematics department to work on this problem. EA also expresses thanks to Dr. James Davies and Professor Jan van den Brand for insightful early discussions this work, Elia Gorokhovsky for suggestions on the proofs of certain lemmas in the manuscript, and Ethan Leventhal for helpful comments on the presentation of the manuscript.\\

\bibliographystyle{alpha}
\bibliography{main.bib}

\end{document}